\newtheorem{thm}{Theorem}[section]
\newtheorem*{thm*}{Theorem}
\newtheorem{lem}[thm]{Lemma}
\newtheorem{prop}[thm]{Proposition}
\theoremstyle{definition}
\newtheorem{defi}[thm]{Definition}
\newtheorem{rem}[thm]{Remark}
\newcommand{\seqto}{\vdash}
\newcommand{\lgodel}{\ulcorner}
\newcommand{\rgodel}{\urcorner}
\newcommand{\realises}{\mathbin{\mathrm{real}}}
\newcommand{\realisesp}{\mathbin{\mathrm{real}^+}}
\newcommand{\realisesm}{\mathbin{\mathrm{real}^-}}
\newcommand{\realisess}{\mathbin{\mathrm{real}^s}}
\newcommand{\realisest}{\mathbin{\mathrm{real}^t}}
\newcommand\Item[1][]{%
  \ifx\relax#1\relax  \item \else \item[#1] \fi
  \abovedisplayskip=0pt\abovedisplayshortskip=0pt~\vspace*{-\baselineskip}}
\title{On Weihrauch reducibility and intuitionistic reverse mathematics}
\author[R. Kuyper]{Rutger Kuyper}
\address{Department of Mathematics\\
University of Wisconsin--Madison\\
Madison, WI 53706\\
USA}
\email{mail@rutgerkuyper.com}
\begin{document}

\begin{abstract}
We show that there is a strong connection between Weihrauch reducibility on one hand, and provability in $\mathrm{EL}_0$, the intuitionistic version of $\mathrm{RCA}_0$, on the other hand. More precisely, we show that Weihrauch reducibility to the composition of finitely many instances of a theorem is captured by provability in $\mathrm{EL}_0$ together with Markov's principle, and that Weihrauch reducibility is captured by an affine subsystem of $\mathrm{EL}_0$ plus Markov's principle.
\end{abstract}
\subjclass[2010]{03B30,03B20,03D30,03F35}
\keywords{Reverse mathematics, Weihrauch reducibility, Intuitionistic logic}

\maketitle

\section{Introduction}

There are two main approaches to classifying the relative strength of theorems in mathematics, where we are usually interested in statements which can be formulated as $\Pi^1_2$-formulas, i.e.\ formulas of the form $\forall X \xi(X) \to \exists Y \psi(X,Y)$. One of them, in the form of \emph{reverse mathematics}, classifies theorems by looking at their relative strength over a weak system, $\mathrm{RCA}_0$. For more background on reverse mathematics we refer to Simpson \cite{simpson-1999}.

The other approach, in the form of \emph{Weihrauch reducibility}, classifies theorems by their uniform computational power. In this case, we say that $\zeta_0$ is Weihrauch reducible to $\zeta_1$ if we can uniformly, computably transform instances of $\zeta_0$ into instances of $\zeta_1$, and given any solution for this instance of $\zeta_1$ we can uniformly compute a solution to $\zeta_0$ for the original instance from $\zeta_1$ and this original instance. For more background on Weihrauch reducibility we refer to Brattka and Gherardi \cite{brattka-gherardi-2011}.

Until recently, research in these two fields has been mostly disjoint. Dorais, Dzhafarov, Hirst, Mileti and Shafer \cite{ddhms-2015} and Dzhafarov \cite{dzhafarov-2015} try to bring together these two fields, by formalising Weihrauch reducibility within $\mathrm{RCA}_0$. This shows that Weihrauch-reducibility within $\mathrm{RCA}_0$ is a special case of reverse mathematics, namely the case where one has to prove $\zeta_0$ uniformly using only one application of $\zeta_1$.

On the other hand, several people have studied the connections between provability of sequential versions of $\Pi^1_2$-formulas in $\mathrm{RCA}_0$, and provability in $\mathrm{EL}_0$. This research was started by Hirst and Mummert \cite{hirst-mummert-2011}, and continued by Dorais \cite{dorais-2014} and Fujiwara \cite{fujiwara-2015}.  Their proofs use techniques from realisability, a field which has long studied the connections between intuitionistic logic and computability.

In the current paper, we combine these two research directions to show that there is a tight connection between Weihrauch reducibility and provability in $\mathrm{EL}_0$. Recall that, in the Weihrauch degrees, there is a natural notion of composition, denoted by $\star$, which was introduced by Brattka, Gherardi and Marcone \cite{bgm-2012} and shown to exist by Brattka, Oliva and Pauly \cite{bop-2013}. Here, $\zeta_0$ is Weihrauch-reducible to $\zeta_2 \star \zeta_1$ if and only if we can solve $\zeta_0$ by transforming an instance $X$ of $\zeta_0$ into an instance $Y$ of $\zeta_1$, and then given a solution for $Y$, transforming this into an instance $Y'$ for $\zeta_2$, and then given any solution for $Y'$, transforming this into a solution for $X$, all in a computable and uniform way. Note that there are three computable transformations taking place, and hence such a reduction is witnessed by three indices $e_1,e_2$ and $e_3$.

The main theorem of this paper is that there is some computable transformation $\zeta \mapsto \zeta'$ of $\Pi^1_2$-formulas satifying that $\zeta$ is classically equivalent to $\zeta'$, for which the following are equivalent:
\begin{enumerate}
\item There are an $n \in \omega$ and $e_1,\dots,e_{n+1}$ such that $\mathrm{RCA}_0$ proves that $e_1,\dots,e_{n+1}$ are indices for Turing functionals witnessing that $\zeta_0$ Weihrauch-reduces to the composition of $n$ copies of $\zeta_1$.
\item $\mathrm{EL}_0 + \mathrm{MP}$ proves that $\zeta'_1 \to \zeta'_0$.
\end{enumerate}
Here, recall that $\mathrm{MP}$ is \emph{Markov's principle}, that is, $\neg\neg \exists x \phi \to \exists x \phi$ for atomic formulas $\phi$. 

While this result shows that intuitionistic provability is very close to Weihrauch reducibility, in the sense that it captures the uniformity expressed by Weihrauch reducibility, it does not capture the other aspect which makes Weihrauch reducibility special, namely that it only allows one to use one instance of $\zeta_1$. However, in this paper we also prove that, if we suitably restrict our sequent calculus to an affine version $\mathrm{IQC}^{\exists\alpha\mathrm{a}}$, to be defined below, we get the following result:
\begin{enumerate}
\item There are $e_1,e_2$ such that $\mathrm{RCA}_0$ proves that $e_1,e_2$ witness that $\zeta_0$ Weihrauch-reduces to $\zeta_1$.
\item $(\mathrm{EL}_0 + \mathrm{MP})^{\exists\alpha\mathrm{a}}$ proves that $\zeta'_1 \to \zeta'_0$.
\end{enumerate}

This shows that Weihrauch-reducibility can be seen as a proof-theoretic notion. That is, proving that $\zeta_0$ is Weihrauch-reducible to $\zeta_1$ in $\mathrm{RCA}_0$ can be seen as proving that $\zeta'_0$ implies $\zeta'_1$ using a restricted set of derivation rules.

The paper is structured as follows. In the next section, we will introduce the sequent calculus for intuitionistic predicate logic, or $\mathrm{IQC}$, that we will be using throughout this paper, and we will introduce $\mathrm{EL}_0$. Next, in section \ref{sec-weihrauch} we will briefly talk about Weihrauch reducibility and how we formalise this notion within $\mathrm{EL}_0$. After that, in section \ref{sec-real} we will introduce a specially tailored reducibility notion, which will allow us to extract Weihrauch reductions from proofs in $\mathrm{EL}_0$. Then, in section \ref{sec-to-weihrauch} we show how this extraction works, and therefore prove the first implication of the first main result mentioned above. The converse of this is proven in section \ref{sec-converse}. Finally, in the last section we show how to capture Weihrauch reducibility using $\mathrm{IQC}^{\exists\alpha\mathrm{a}}$.

Our notation is mostly standard. We let $\omega$ denote the (standard) natural numbers. For undefined notions from proof theory, we refer to Buss \cite{buss-1998}.

\section{$\mathrm{IQC}$ and $\mathrm{EL}_0$}

In this section, we will formalise the sequent calculus for $\mathrm{IQC}$ that we will be working with in this paper, since the proofs depend on the exact calculus we take. After that, we will discuss $\mathrm{EL}_0$ and the affine version mentioned in the introduction.

\begin{defi}\label{defi-iqc}
We say that a sequent $\Gamma \seqto \phi$ is derivable in $\mathrm{IQC}$ if it is derivable using the following rules.

\begin{prooftree}
\RightLabel{$(I)$}
\AxiomC{}
\UnaryInfC{$A \seqto A$}
\end{prooftree}
\begin{minipage}[t]{0.5\textwidth}
\begin{prooftree}
\RightLabel{$(\wedge L)$}
\AxiomC{$\Gamma,\psi_1,\psi_2 \seqto \phi$}
\UnaryInfC{$\Gamma,\psi_1 \wedge \psi_2 \seqto \phi$}
\end{prooftree}

\begin{prooftree}
\RightLabel{$({\to} L)$}
\AxiomC{$\Gamma_1 \seqto \psi_1$}
\AxiomC{$\Gamma_2,\psi_2 \seqto \phi$}
\BinaryInfC{$\Gamma_1,\Gamma_2,\psi_1 \to \psi_2 \seqto \phi$}
\end{prooftree}

\begin{prooftree}
\RightLabel{$(\forall L)$}
\AxiomC{$\Gamma,\psi[x := t] \seqto \phi$}
\UnaryInfC{$\Gamma,\forall x \psi \seqto \phi$}
\end{prooftree}
\end{minipage}
\begin{minipage}[t]{0.5\textwidth}
\begin{prooftree}
\RightLabel{$(\wedge R)$}
\AxiomC{$\Gamma_1 \seqto \phi_1$}
\AxiomC{$\Gamma_2 \seqto \phi_2$}
\BinaryInfC{$\Gamma_1,\Gamma_2 \seqto \phi_1 \wedge \phi_2$}
\end{prooftree}

\begin{prooftree}
\RightLabel{$({\to} R)$}
\AxiomC{$\Gamma,\phi_1 \seqto \phi_2$}
\UnaryInfC{$\Gamma \seqto \phi_1 \to \phi_2$}
\end{prooftree}

\begin{prooftree}
\RightLabel{$(\forall R)$}
\AxiomC{$\Gamma \seqto \phi[x := y]$}
\UnaryInfC{$\Gamma \seqto \forall x \phi$}
\end{prooftree}
\end{minipage}

\begin{minipage}[t]{0.5\textwidth}
\begin{prooftree}
\RightLabel{$(\exists L)$}
\AxiomC{$\Gamma,\psi[x := y] \seqto \phi$}
\UnaryInfC{$\Gamma,\exists x \psi \seqto \phi$}
\end{prooftree}

\begin{prooftree}
\RightLabel{$(W)$}
\AxiomC{$\Gamma \seqto \phi$}
\UnaryInfC{$\Gamma,\psi \seqto \phi$}
\end{prooftree}
\begin{prooftree}
\RightLabel{$(C)$}
\AxiomC{$\Gamma,\psi,\psi \seqto \phi$}
\UnaryInfC{$\Gamma,\psi \seqto \phi$}
\end{prooftree}

\begin{prooftree}
\RightLabel{$(P)$}
\AxiomC{$\Gamma_1,\psi_1,\psi_2,\Gamma_2 \seqto \phi$}
\UnaryInfC{$\Gamma_1,\psi_2,\psi_1,\Gamma_2 \seqto \phi$}
\end{prooftree}
\end{minipage}
\begin{minipage}[t]{0.5\textwidth}
\begin{prooftree}
\RightLabel{$(\exists R)$}
\AxiomC{$\Gamma \seqto \phi[x := t]$}
\UnaryInfC{$\Gamma \seqto \exists x \phi$}
\end{prooftree}

\begin{prooftree}
\RightLabel{$(\bot)$}
\AxiomC{$\Gamma \seqto \bot$}
\UnaryInfC{$\Gamma \seqto A$}
\end{prooftree}
\end{minipage}

\begin{prooftree}
\RightLabel{(Cut)}
\AxiomC{$\Gamma_1 \seqto \psi$}
\AxiomC{$\Gamma_2,\psi \seqto \phi$}
\BinaryInfC{$\Gamma_1,\Gamma_2 \seqto \phi$}
\end{prooftree}

Here, $A$ is an atomic formula, $y$ does not appear free in $\Gamma$ and $\phi$ in $(\exists L)$, and $y$ does not appear free in $\Gamma$ in $\forall R$. Furthermore, in case $\psi_2$ in $({\to} L)$ is $\bot$, we only allow this rule if $\phi$ is also $\bot$ (we will also call this rule $(\neg L))$.
\end{defi}

As commonly done in intuitionistic logic, we do not have any rules for $\neg$, because $\neg$ is not a primitive connective: $\neg\phi$ is interpreted as $\phi \to \bot$.
However, note that our calculus also does not contain any rules for $\vee$. This is because, as is usual in $\mathrm{EL}$ and $\mathrm{EL}_0$, we do not see $\vee$ as a primitive connective either, but instead define $\phi \vee \psi$ as $\exists x ((x = 0 \to \phi) \wedge (x \not= 0 \to \psi))$. This is justified by the fact that equality is decidable in $\mathrm{EL}_0$, see Troelstra \cite{troelstra-1973}.
Furthermore, in $\mathrm{EL}$ this definition for $\vee$ is equivalent to the usual intuitionistic disjunction, see \cite[section 1.3.7]{troelstra-1973}. 

However, it is important to note that this definition of $\vee$ is \emph{weaker} than the usual intuitionistic disjunction, because the proof of their equivalence uses arithmetical induction, which we do not have available in $\mathrm{EL}_0$. We will come back to this in Remark \ref{rem-disjunction} below, after we have formalised $\mathrm{EL}_0$.

Let us adopt the following standard convention: \emph{every variable is bound at most once, and no variable can be both bound and free simultaneously}. Furthermore, let us adopt the convention that once a bound variable disappears from a proof because of an application of the cut rule, then we are not allowed to reuse that variable later in the proof.

Now, let us define $\mathrm{EL}_0$, following Dorais \cite{dorais-2014} (who mostly follows \cite{troelstra-1973}). Unlike the usual formulation of $\mathrm{RCA}_0$, this is done using functions instead of sets. Let us denote number variables by $x,y,\dots$ and function variables by $\alpha,\beta,\dots$. The language contains the constant $0$, the successor function $S$, equality, and a function symbol for every primitive recursive function. There are two kinds of terms: number terms $t$ and function terms $\tau$. They are formed as follows:
\begin{itemize}
\item Every number variable is a number term.
\item Every function variable is a function term.
\item The zero constant $0$ is a number term.
\item If $t$ is a number term, then so is $S(t)$.
\item If $t_1,\dots,t_n$ are number terms and $f$ is a symbol for an $n$-ary primitive recursive function, then $f(t_1,\dots,t_n)$ is a number term.
\item If $t$ is a number term and $\tau$ is a function term, then the evaluation $\tau(t)$ is a number term.
\item If $t$ is a number term and $x$ is a number variable, then the lambda abstraction $\lambda x.t$ is a function term.
\item If $t$ is a number term and $\gamma$ is a function term, then the recursion $\mathrm{R}t \tau$ is a function term.
\end{itemize}

We only have one atomic relation in our language: equality on the number sort. Equality on the function sort is defined by extensionality, i.e.\
\[\tau_1 = \tau_2 \leftrightarrow \forall x(\tau_1(x) = \tau_2(x)).\]

\begin{defi}\label{defi-el0}
Let $\Gamma \seqto \phi$ be a sequent in the language described above. Then we say that $\Gamma \seqto \phi$ is derivable in $\mathrm{EL}_0$ if it is derivable in $\mathrm{IQC}$ using the equality axioms, the defining axioms for all the primitive recursive function symbols, and the following axioms.

\begin{equation}
S(x) \not= 0 \wedge (S(x) = S(y) \to x = y) \tag{SA}
\end{equation}

\begin{equation}
\forall x(B(x) \to B(S(x))) \to \forall y(B(0) \to B(y)) \tag{QF-IA}
\end{equation}

\begin{equation}
(\lambda x.t)(t') = t[x := t'] \tag{CON}
\end{equation}

\begin{equation}
(\mathrm{R}t \tau)(0) = t \wedge (\mathrm{R}t \tau)(S(t')) = \tau((\mathrm{R}t \tau)(t')) \tag{REC}
\end{equation}

\begin{equation}
\forall x \exists y B(x,y) \to \exists \alpha \forall z B(z,\alpha(z)) \tag{QF-AC$^{0,0}$}
\end{equation}

Here, $B$ is a quantifier-free formula.
\end{defi}

Since our system has symbols for all primitive recursive functions, we can encode finite sequences of numbers in the usual manner. We will implicitly do this throughout this paper.

Although $\mathrm{RCA}_0$ is usually formulated using sets instead of functions, there is a natural connection between them, since we can identify a function with its graph, and a set with its indicator function. Indeed, if we add the \emph{law of the excluded middle}, i.e.\ $\phi \vee \neg\phi$, to $\mathrm{EL}_0$, we get a system that is equivalent to $\mathrm{RCA}_0$ under the identification just described (see Dorais \cite{dorais-2014} or Kohlenbach \cite{kohlenbach-2005}). A particular ingredient of this proof is that $\mathrm{EL}_0$ in fact allows induction (QF-IA) and choice (QF-AC$^{0,0}$) for not just quantifier-free formulas $B$, but even for $\Sigma^0_1$-formulas $\phi$.

\begin{prop}
Let $\phi(x) = \exists y B(x,y)$, where $B$ is quantifier-free. Then $\mathrm{EL}_0$ proves
\[\forall x(\phi(x) \to \phi(S(x))) \to \forall x(\phi(0) \to \phi(x))\]
and
\[\forall x \exists y \phi(x,y) \to \exists \alpha \forall x \phi(x,\alpha(x)).\]
\end{prop}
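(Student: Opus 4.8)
The plan is to bootstrap from the quantifier-free instances (QF-IA) and (QF-AC$^{0,0}$) to the $\Sigma^0_1$ versions, using in each case a single application of the quantifier-free choice principle to "flatten" the $\Sigma^0_1$-formula into a quantifier-free one, together with primitive recursive pairing to code the existential witnesses.

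For the choice statement: assume $\forall x \exists y \phi(x,y)$, i.e.\ $\forall x \exists y \exists z B(x,y,z)$. Using primitive recursive pairing $\langle\cdot,\cdot\rangle$ with projections $(\cdot)_0,(\cdot)_1$, this is equivalent in $\mathrm{EL}_0$ to $\forall x \exists w\, B(x,(w)_0,(w)_1)$, which is $\forall x \exists w\, B'(x,w)$ for the quantifier-free $B'(x,w) := B(x,(w)_0,(w)_1)$. Now apply (QF-AC$^{0,0}$) to get $\exists \beta \forall x\, B'(x,\beta(x))$. Setting $\alpha := \lambda x.(\beta(x))_0$, which is a legitimate function term by (CON), we get $\forall x\, B(x,\alpha(x),(\beta(x))_1)$, hence $\forall x \exists z\, B(x,\alpha(x),z)$, i.e.\ $\forall x\, \phi(x,\alpha(x))$, so $\exists \alpha \forall x\, \phi(x,\alpha(x))$. (Strictly, the version in the statement has $\phi$ a unary predicate $\phi(x) = \exists y B(x,y)$ for the induction part and a binary relation $\phi(x,y)$ for the choice part; I will phrase the choice part with a binary $B$ as above.)

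For the induction statement: assume the hypothesis $\forall x(\phi(x) \to \phi(S(x)))$, where $\phi(x) = \exists y B(x,y)$, and assume $\phi(0)$, say $B(0,y_0)$. The idea is to build, by primitive recursion, a function $\beta$ that produces a witness for $\phi(x)$ at each stage. The hypothesis gives $\forall x \forall y \exists y'\, (B(x,y) \to B(S(x),y'))$ — note this is the crucial step where one must be slightly careful: intuitionistically $\phi(x)\to\phi(S(x))$ does not immediately give a function $y \mapsto y'$, but $\forall x\forall y\,(B(x,y)\to\exists y' B(S(x),y'))$ does follow, and this is of the form $\forall u \exists y'\, C(u,y')$ with $C$ quantifier-free (after pairing $u = \langle x,y\rangle$), so the $\Sigma^0_1$-choice just proved — or directly (QF-AC$^{0,0}$) after flattening as above — yields a function $g$ with $\forall x \forall y\,(B(x,y) \to B(S(x),g(\langle x,y\rangle)))$. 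Now define $\beta$ by (REC): $\beta(0) = y_0$ and $\beta(S(x)) = g(\langle x,\beta(x)\rangle)$. A quantifier-free induction (QF-IA) on the formula $B(x,\beta(x))$ — which is quantifier-free in $x$ since $\beta$ is a fixed function term — shows $\forall x\, B(x,\beta(x))$, whence $\forall x\, \phi(x)$, giving $\forall x(\phi(0) \to \phi(x))$.

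The main obstacle, and the only genuinely delicate point, is the intuitionistic handling of the step "$\phi(x)\to\phi(S(x))$ yields a choice function": one cannot directly apply choice to $\forall x(\phi(x)\to\phi(S(x)))$ since the matrix is an implication between $\Sigma^0_1$-formulas, not itself $\Sigma^0_1$. The fix is to first observe $\forall x\,(\phi(x)\to\phi(S(x)))$ entails $\forall x\forall y\,(B(x,y)\to\exists y'\,B(S(x),y'))$, and then to push the $\forall y$ past the implication into the conclusion to obtain $\forall x\forall y\,\exists y'\,(B(x,y)\to B(S(x),y'))$ — which is valid intuitionistically and is now in the correct $\forall\exists$-quantifier-free form for (QF-AC$^{0,0}$). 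One should also double-check that the lambda-abstractions and recursors used ($\lambda x.(\beta(x))_0$, $\mathrm{R}y_0(\ldots)$) are well-formed function terms and that (CON) and (REC) license the equational reasoning; this is routine given the term-formation rules in Definition~\ref{defi-el0}.
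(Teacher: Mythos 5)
Your proof is correct. For the choice part it matches the paper, which simply notes that it ``follows directly using a pairing function''; you spell this out. For the induction part, your argument and the paper's are close in spirit---both reduce to (QF-IA) applied to a quantifier-free formula about a witness sequence built with the recursor---but the order of operations differs, and yours is the more careful version. The paper defines the quantifier-free relation $C(x,y)$ (``$y$ codes a length-$(x+1)$ sequence of $B$-witnesses''), lets $\tau$ be its characteristic function, and then asserts that ``using QF-IA, $\forall x\exists y(\tau(x,y)=1)$ holds'', finishing with QF-AC$^{0,0}$. But $\exists y(\tau(x,y)=1)$ is $\Sigma^0_1$, not quantifier-free, so as written that step is an instance of exactly the $\Sigma^0_1$-induction being proved. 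Your version reorders things so that each axiom is applied where its hypothesis actually holds: first extract the witness-transfer function $g$ via QF-AC$^{0,0}$ applied to a genuinely quantifier-free matrix, then build the witness function $\beta$ as a function term by (REC), and only then apply QF-IA to the quantifier-free formula $B(x,\beta(x))$. That is precisely the step the paper's terse sketch leaves implicit; the two proofs otherwise construct essentially the same object. One small imprecision to fix: you write that passing from $\forall x\forall y(B(x,y)\to\exists y' B(S(x),y'))$ to $\forall x\forall y\exists y'(B(x,y)\to B(S(x),y'))$ ``is valid intuitionistically''. The schema $(A\to\exists y'\,C(y'))\to\exists y'(A\to C(y'))$ is not intuitionistically valid in general; here it relies on $B(x,y)$ being decidable in $\mathrm{EL}_0$ (because it is quantifier-free), which is what licenses the case split you implicitly perform. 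Make that dependence explicit.
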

\begin{proof}
Consider the relation $C(x,y)$ which holds if and only if both $y$ is a sequence of length $x+1$, and for every $z \leq x$ we have that $B(x,y(z))$ holds. Note that this relation is primitive recursive in $B$, so using the recursion operator we can find a function term $\tau$ representing the indicator function of this relation.\footnote{Note that $B$ might contain function variables, so we cannot directly use the fact that the language contains a symbol for every primitive recursive function.} Let us reason within $\mathrm{EL}_0$. Assume $\forall x(\phi(x) \to \phi(S(x)))$ and $\phi(0)$ both hold. Then, using QF-IA, we know that $\forall x \exists y( \tau(x,y)=1)$ holds. Thus, by QF-AC$^{0,0}$ there is a function $f$ such that $\forall x (\tau(x,f(x))=1)$ holds. In particular, we have that $\forall x (B(x,f(x)(x)))$ holds, and therefore $\forall x (\phi(x))$ holds.

That $\mathrm{EL}_0$ proves the axiom of choice for $\Sigma^0_1$-formulas follows directly using a pairing function.
\end{proof}

\begin{rem}\label{rem-disjunction}
As noted above, we do not see $\vee$ as a primitive connective, but instead interpret $\phi \vee \psi$ as  $\exists x ((x = 0 \to \phi) \wedge (x \not= 0 \to \psi))$. We also noted that this interpretation is equivalent to the usual intuitionistic disjunction in $\mathrm{EL}$, i.e.\ $\mathrm{EL}_0$ with full induction. However, in $\mathrm{EL}_0$ this is not true, because we do not have enough induction to prove this. In $\mathrm{EL}_0$ only a weak version of the usual $\vee L$ rule holds: we generally only have that
\begin{prooftree}
\AxiomC{$\Gamma,\psi_1 \seqto \phi$}
\AxiomC{$\Gamma,\psi_2 \seqto \phi$}
\BinaryInfC{$\Gamma,\psi_1 \vee \psi_2 \seqto \phi$}
\end{prooftree}
holds for $\Sigma^0_1$-formulas $\phi$, and for formulas $\phi$ starting with a negation. In fact, our proofs implicitly exploit this fact. Fortunately, all the other usual logical laws, such as the commutativity of $\vee$, still hold.
\end{rem}

There is one additional principle which, although not part of $\mathrm{EL}_0$, will be used in this paper.

\begin{defi}
\emph{Markov's principle}, or $\mathrm{MP}$, is the principle
\[\neg\neg \exists x B(x) \to \exists y B(y)\]
for quantifier-free formulas $B$.
\end{defi}

As for $\mathrm{RCA}_0$, much of elementary computability theory can be carried out inside $\mathrm{EL}_0$; see Kleene \cite{kleene-1969}\footnote{Although Kleene uses a different system, as mentioned in Troelstra \cite[p.\ 73]{troelstra-1973}, his developments apply to our system as well.} and Troelstra \cite{troelstra-1973} for many details.

In particular, the normal form theorem holds, in the following form. Here, $\alpha_1 \oplus \dots \oplus \alpha_n$ denotes the function which sends $kn+i$ to $\alpha_{i-1}(k)$, and just like for the pairing functions for numbers, $\mathrm{EL}_0$ proves all relevant properties about this join. Below, $\lgodel e \rgodel$ denotes the term representing the number $e$.

\begin{thm}{\rm (\cite{kleene-1969})}\label{thm-normal-form}
There exists a quantifier-free formula $\phi(y_1,y_2,\beta,y_3,y_4,y_5)$ and a term $s(z)$ such that for every function term $\tau(\alpha_1,\dots,\alpha_n,x_1,\dots,x_m)$ there exists an $e \in \omega$ such that in $\mathrm{EL}_0$ we have:
\begin{align*}
&\forall \alpha_1, \dots, \alpha_n \forall x_1, \dots ,x_m \forall a,b\\
&\;\;\tau(\alpha_1,\dots,\alpha_n,x_1,\dots,x_m)(a) = b\\
&\quad\leftrightarrow \exists k (\phi(\lgodel e \rgodel,\langle n,m\rangle,\alpha_1 \oplus \dots \oplus \alpha_n,\langle x_1,\dots,x_m\rangle,a,k) \wedge s(k) = b)\\
&\quad\leftrightarrow \forall k (\phi(\lgodel e \rgodel,\langle n,m\rangle,\alpha_1 \oplus \dots \oplus \alpha_n,\langle x_1,\dots,x_m\rangle,a,k) \to s(k) = b).
\end{align*}
\end{thm}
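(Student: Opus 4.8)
The plan is to carry out Kleene's classical normal form construction inside $\mathrm{EL}_0$, the only real task being to verify that each step goes through with only quantifier-free induction (QF-IA) available. First I would fix a standard oracle machine (or oracle $\mu$-recursion) formalism and let $\phi(y_1,y_2,\beta,y_3,y_4,y_5)$ be the usual Kleene $T$-predicate, set up so that $\phi(\lgodel e\rgodel,\langle n,m\rangle,\beta,\langle x_1,\dots,x_m\rangle,a,k)$ asserts that $k$ codes a complete, halting computation of machine $e$ — with $n$ oracle inputs and $m$ number inputs — on argument $a$, in which every oracle query recorded in $k$, say to position $p$, was answered by $\beta(p)$; let $s(k)$ read off the output stored in $k$. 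Since a halting computation makes only finitely many oracle queries, all of them explicitly listed in $k$, the clause ``every recorded query/answer pair agrees with $\beta$'' is a bounded, hence quantifier-free, statement in the function parameter $\beta$, and the remaining bookkeeping about step counts and codes is primitive recursive; so $\phi$ is quantifier-free. The computability facts about this apparatus that $\mathrm{EL}_0$ needs — that each primitive recursive function is computed by some machine, an effective composition/$s$-$m$-$n$ combinator on indices, and that $\phi$ is functional in the sense that $\phi(\dots,k)\wedge\phi(\dots,k')\to s(k)=s(k')$ — are precisely the pieces of elementary recursion theory developed for systems like $\mathrm{EL}_0$ in Kleene \cite{kleene-1969} and Troelstra \cite{troelstra-1973}, each established by (QF-IA).

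Next I would assign an index to every term by a mutual induction on number terms and function terms. For a number term $t(\vec\alpha,\vec x)$ I produce $e$ with $\mathrm{EL}_0$ proving $t(\vec\alpha,\vec x)=s(k)$ for the $k$ witnessing $\phi(\lgodel e\rgodel,\dots,0,k)$ (the argument slot being ignored): the cases $0$, $S(\cdot)$, a number variable, and a primitive recursive symbol $f(t_1,\dots,t_\ell)$ are handled by composing with built-in machines, while the evaluation term $\tau(t)$ is handled by first computing $t$ and then, via the composition combinator, running the machine for $\tau$ — supplied by the function-term induction hypothesis — on that value. For a function term $\tau(\vec\alpha,\vec x)$ I produce $e$ with $\mathrm{EL}_0$ proving $\tau(\vec\alpha,\vec x)(a)=s(k)$ for the $k$ with $\phi(\lgodel e\rgodel,\dots,a,k)$: a function variable $\alpha_i$ becomes the machine that on argument $a$ returns the value of $\beta$ at the position that the definition of $\oplus$ assigns to $\alpha_i(a)$; $\lambda x.t$ becomes the machine that shifts its argument $a$ into the number-input block and calls the machine for $t$; and $\mathrm{R}t\tau'$ becomes the machine that on argument $a$ runs the evident bounded recursion, whose termination and whose agreement with (REC) are proved by (QF-IA) on $a$. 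Note that all function terms in our language are total (there is no $\mu$-operator), which is what makes every such machine halt, provably in $\mathrm{EL}_0$.

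Finally I would read off the three displayed formulas. The middle equivalence $\tau(\vec\alpha,\vec x)(a)=b \leftrightarrow \exists k(\phi(\dots,k)\wedge s(k)=b)$ is exactly what the induction delivers for the index $e$ attached to $\tau$. For the $\Pi^0_1$ form: instantiating that equivalence at $b:=\tau(\vec\alpha,\vec x)(a)$ produces a witness $k_0$ with $\phi(\dots,k_0)$, so if $\forall k(\phi(\dots,k)\to s(k)=b)$ then $b=s(k_0)=\tau(\vec\alpha,\vec x)(a)$; and conversely, if $\tau(\vec\alpha,\vec x)(a)=b$ then for any $k$ with $\phi(\dots,k)$ the middle equivalence gives $s(k)=\tau(\vec\alpha,\vec x)(a)=b$. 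No appeal to Markov's principle is needed here, precisely because $\phi$ is functional and the function terms are total, so we actually obtain $\exists k\,\phi(\dots,k)$ rather than merely $\neg\neg\exists k\,\phi(\dots,k)$. The step I expect to require the most care is the term induction itself: arranging the index combinators so that term-level composition, $\lambda$-abstraction and primitive recursion become genuinely effective operations on indices whose correctness is certifiable with only quantifier-free induction — but this is routine once the computability toolkit for $\mathrm{EL}_0$ cited above is in place.
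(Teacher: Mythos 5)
Your proposal is, in substance, exactly what the paper's proof is: the paper simply cites Lemma 41 and the surrounding discussion on p.~69 of Kleene \cite{kleene-1969}, and your sketch is a careful expansion of that reference, including the key observation that oracle agreement is bounded by the computation code $k$ (so $\phi$ is quantifier-free) and that $\mathrm{MP}$ is not needed because the terms are provably total. The one place you might tighten the wording is the recursion case: the termination statement is $\Sigma^0_1$ in $a$, so strictly speaking you are invoking the $\Sigma^0_1$-induction available in $\mathrm{EL}_0$ (the paper's Proposition 2.4) rather than literal QF-IA, unless you first supply a primitive recursive bound on the step count.
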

\begin{proof}
This follows from Lemma 41 and the discussion on p.\ 69 of \cite{kleene-1969}.
\end{proof}

In view of this proposition, we will write
\[\Phi_e(\alpha_1,\dots,\alpha_n,x_1,\dots,x_m)(a) = b\]
for the formula
\[\exists k (\phi(\lgodel e \rgodel,\langle n,m\rangle,\alpha_1 \oplus \dots \oplus \alpha_n,\langle x_1,\dots,x_m\rangle,a,k) \wedge s(k) = b).\]
Also, we will write
\[\Phi_e(\alpha_1,\dots,\alpha_n,x_1,\dots,x_m) = \beta\]
for the formula
\[\forall a (\Phi_e(\alpha_1,\dots,\alpha_n,x_1,\dots,x_m)(a) = \beta(a)).\]
Finally, we will write
\[\Phi_e(\alpha_1,\dots,\alpha_n,x_1,\dots,x_m) = c\]
for the formula 
\[\Phi_e(\alpha_1,\dots,\alpha_n,x_1,\dots,x_m)(0) = c.\]

For some of our results below, we need to look at a weaker, affine version of $\mathrm{IQC}$. Here, by affine we mean that we restrict the use of the contraction rule $(C)$. Alternatively, we could have formalised this system by a suitable embedding into linear logic, but we think the current approach is more intuitive.

\begin{defi}
We say that a sequent $\Gamma \seqto \phi$ is derivable in $\mathrm{IQC}^\mathrm{a}$ if it is derivable using the rules from Definition \ref{defi-iqc}, except for $(C)$.
\end{defi}

Finally, let us define two affine versions of $\mathrm{EL}_0$.

\begin{defi}
We define $\mathrm{EL}_0^\mathrm{a}$ as in Definition \ref{defi-el0}, except using $\mathrm{IQC}^\mathrm{a}$ instead of $\mathrm{IQC}$. 
Furthermore, we say that a sequent $\Gamma \seqto \phi$ is derivable in $\mathrm{EL}_0^{\exists\alpha\mathrm{a}}$ if it is derivable from the axioms in Definition \ref{defi-el0} using the rules from Definition \ref{defi-iqc},  except we only allow $(C)$ for formulas not containing function quantifiers, and we only allow $(W)$ for subformulas of formulas of the form $\exists \alpha \psi$ where $\psi$ does not contain any function quantifiers. We define $(\mathrm{EL}_0 + \mathrm{MP})^\mathrm{a}$ and $(\mathrm{EL}_0 + \mathrm{MP})^\mathrm{\alpha a}$ in a similar way.
\end{defi}

\section{Weihrauch reducibility}\label{sec-weihrauch}

In this section we will discuss how Weihrauch reducibility can be formalised within $\mathrm{EL}_0$. Weihrauch reducibility is normally defined using represented spaces, see e.g. Brattka and Gherardi \cite{brattka-gherardi-2011}. However, we are specifically looking at problems given by formulas of the form $\forall \alpha \xi(\alpha) \to \exists \beta \psi(\alpha,\beta)$ (which we will henceforth call $\Pi^1_2$-formulas), in which case Weihrauch reducibility can be defined in an easier way (see also Dorais et al. \cite{ddhms-2015} and Dzhafarov \cite{dzhafarov-2015}). So, let us define Weihrauch reducibility within $\mathrm{EL}_0$ as follows.

\begin{defi}
Let $\zeta_0 = \forall \alpha_0 \xi_0(\alpha_0) \to \exists \beta_0 \psi_0(\alpha_0,\beta_0)$ and $\zeta_1 = \forall \alpha_1 \xi_1(\alpha_0) \to \exists \beta_1 \psi_1(\alpha_1,\beta_1)$ be $\Pi^1_2$-formulas. Then we define, in $\mathrm{EL}_0$, that $\zeta_0$ \emph{Weihrauch-reduces to} $\zeta_1$ if there exists $e_1,e_2$ such that
\begin{align*}
&\forall \alpha_0(\xi_0(\alpha_0) \to \Phi_{e_1}(\alpha_0){\downarrow} \wedge \xi_1(\Phi_{e_1}(\alpha_0)))\\
\wedge &\forall \alpha_0 \forall \beta_0 ((\xi_0(\alpha_0) \wedge \psi_1(\Phi_{e_1}(\alpha_0),\beta_0))\\
&\quad\to \Phi_{e_2}(\alpha_0,\Phi_{e_1}(\alpha_0),\beta_0){\downarrow} \wedge \psi_0(\alpha_0,\Phi_{e_2}(\alpha_0,\Phi_{e_1}(\alpha_0),\beta_0))).
\end{align*}
\end{defi}

As discussed in the introduction, there is a notion of composition, the \emph{compositional product}, on Weihrauch degrees. We would like to define the composition of two $\Pi^1_2$-formulas in a way which corresponds to this notion in the Weihrauch degrees, but it does not seem like this can be done using a $\Pi^1_2$-formula. Instead, we define what it means to be reducible to a composition.

\begin{defi}
Let $\zeta_0 = \forall \alpha_0 \xi_0(\alpha_0) \to \exists \beta_0 \psi_0(\alpha_0,\beta_0), \dots, \zeta_n = \forall \alpha_n \xi_n(\alpha_n) \to \exists \beta_n \psi_n(\alpha_n,\beta_n)$ be $\Pi^1_2$-formulas. Then we define, in $\mathrm{EL}_0$, that $\zeta_0$ \emph{Weihrauch-reduces to the composition of} $\zeta_n,\dots,\zeta_1$ if there exist $e_1,\dots,e_{n+1}$ such that, for all $\alpha_0$ and for all $\beta_1,\dots,\beta_n$, and for every $1 \leq i \leq n$, we have that, if for $1 \leq j \leq n$ we write $\alpha_j$ for $\Phi_{e_{j}}(\alpha_0,\alpha_1,\dots,\alpha_{j-1},\beta_1,\dots,\beta_{j-1})$, then, if for all $1 < j \leq i$ we have that
$\psi_j(\alpha_j,\beta_j)$ holds, and $\xi_0(\alpha_0)$ holds, then
\[\Phi_{e_i}(\alpha_0,\alpha_1,\dots,\alpha_{i-1},\beta_1,\dots,\beta_{i-1}) {\downarrow}\]
holds and $\xi_i(\alpha_i)$ holds;
and finally, that if for all $1 \leq j \leq n$ we have that $\psi_j(\alpha_j,\beta_j)$ holds, then
we have that
\[\Phi_{e_{n+1}}(\alpha_0,\alpha_1,\dots,\alpha_{n},\beta_1,\dots,\beta_{n}) {\downarrow}\]
holds, and that
\[\psi_0(\alpha_0,\Phi_{e_{n+1}}(\alpha_0,\alpha_1,\dots,\alpha_{n},\beta_1,\dots,\beta_{n}))\]
holds.

We say that $e_1,\dots,e_{n+1}$ \emph{witness} the Weihrauch reduction.
\end{defi}

It is not hard to verify that $\zeta_0$ Weihrauch-reduces to the composition of $\zeta_1$ if and only if $\zeta_0$ Weihrauch-reduces to $\zeta_1$, as expected.

If we see each $\zeta_n,\dots,\zeta_1$ as a black box transforming instances into solutions, Figure \ref{figure-1} illustrates what it means for $\zeta_0$ to Weihrauch-reduce to the composition of $\zeta_n,\dots,\zeta_1$.\footnote{Note that $\Phi_{e_i}$ is allowed to use all of the functions appearing earlier in the sequence. For example, $\Phi_{e_2}$ is allowed to use $\alpha_0,\alpha_1$ and $\beta_1$, and not just $\beta_1$, although the diagram might suggest otherwise.}

\begin{figure}
\label{figure-1}
\xymatrixcolsep{3pc}
\centerline{
\xymatrix{
\alpha_0 \ar[r]^{\Phi_{e_1}} & \alpha_1 \ar[d]_{\zeta_1} & \alpha_2 \ar[d]_{\zeta_2} & \dots & & \alpha_n \ar[d]_{\zeta_n} \\
\beta_0 & \beta_1 \ar[ur]^{\Phi_{e_2}} & \beta_2 & \dots &\beta_{n-1} \ar[ur]^{\Phi_{e_n}} & \beta_n \ar@/^2pc/[lllll]^{\Phi_{e_{n+1}}}}}
\caption{A Weihrauch reduction of $\zeta_0$ to the composition of $\zeta_n,\dots,\zeta_1$.}
\end{figure}
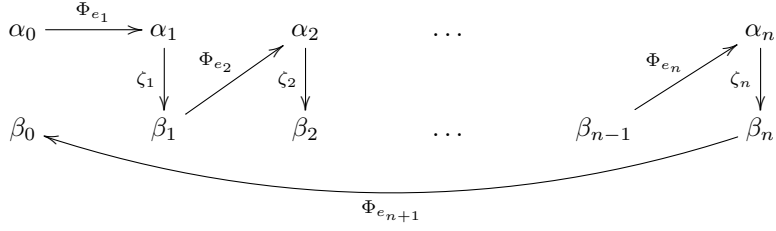



\section{Realisability}\label{sec-real}

To prove our result, we will introduce a specially tailored notion of realisability. Our realisers will be pairs of functions of the form $v,w: \{1,\dots,n\} \to \omega \cup (\omega \times \omega)$. Let us explain the motivation behind them. First, let us consider $w$. The idea is that, if $w(i) \in \omega \times \omega$, say $w(i) = (k,e)$, then $w$ says that the intended value for $x_i$ can be computed using the Turing functional $\Phi_e$. Similarly, if $v(i) = (k,e)$, then $v$ says that the intended value for $\alpha_i$ can be computed using the Turing functional $\Phi_e$.
Furthermore, we want to carefully bookkeep which other variables it is allowed to use in the computation, which is what we use $k$ for. Let us define $w_1(j)$ as the first coordinate of $w(j)$ if $w(j) \in \omega \times \omega$, and as $w(j)$ otherwise. Similarly we can define $v_1$.

Then, in the computation of $x_i$, $\Phi_e$ is only allowed to use those $\alpha_i$ with $v(\alpha_i) < k$ and those $x_j$ with $w_1(j) < k$. Similarly, if $v(i) = (k,e)$, then we can compute the intended value for $\alpha_i$ using $\alpha_j$ with $v_1(j) < k$, and $x_j$ with $w_1(j) < k$.

Let us formalise this idea.

\begin{defi}
Let $v: \{1,\dots,n\} \to \omega \cup (\omega \times \omega)$ and $w: \{1,\dots,n'\} \to \omega \cup (\omega \times \omega)$. We say that a sequence of
functions $f_1,\dots,f_n$ and a sequence of numbers $a_1,\dots,a_{n'}$ are \emph{valid for $(v,w)$ at $v(m)$} if, whenever $v(m) \in \omega \times \omega$, say $v(m) = (k,e)$, if $i_1,\dots,i_s$ lists in increasing order those $1 \leq i \leq n$ with $v_1(i) < k$, and $j_1,\dots,j_t$ lists in increasing order those $1 \leq j \leq n'$ with $v(j) < k$, then we have that, if $\Phi_e(f_{i_1},\dots,f_{i_s},a_{i_1},\dots,a_{i_t}) {\downarrow}$, then it is equal to $f_m$. We say that it is \emph{strongly valid for $(v,w)$ at $v(m)$} if it is both valid for $(v,w)$ at $v(m)$, and $\Phi_e(f_{i_1},\dots,f_{i_s},a_{i_1},\dots,a_{i_t}) {\downarrow}$.

Similarly, we define the notions of valid and strongly valid for $(v,w)$ at $w(m)$ using $a_m$.

Now, we say that $(f_1,\dots,f_n,a_1,\dots,a_n)$ is \emph{valid for $(v,w)$} if it is valid at $v(m)$ for all $1 \leq m \leq n$ and it is valid for $w(m)$ at all $1 \leq m \leq n'$.

Finally, if whenever $v(m) = (k,e)$ or $w(m) = (k,e)$, if for all $i$ with $v(i) = (k',e')$ we have that the computation of the functional $\Phi_{e}$ is independent of $f_i$ (i.e.\ it ignores the input $f_i$), and for all $j$ with $w(j) = (k',e')$ we have that the computation is independent of $a_j$, then we will say that $(v,w)$ is \emph{self-contained}.
\end{defi}

Note that we can identify
\[\bigcup_{n,n' \in \omega} (\{1,\dots,n\} \to \omega \cup (\omega \times \omega)) \times (\{1,\dots,n'\} \to \omega \cup (\omega \times \omega))\]
with the set of natural numbers in a primitive recursive way, and hence in a way which is definable within $\mathrm{EL}_0$. We will tacitly assume this. Given any $v$, we let $|v|$ be the number $n$ such that $v$ is defined on $\{1,\dots,n\}$, and similarly for $w$.

In our proofs, we will be considering $\Pi^1_2$-formulas. Let us therefore make the following definition.

\begin{defi}
A $\Pi^1_2$-sequent is a sequent $\Gamma \seqto \phi$ such that all formulas in $\Gamma$ and $\phi$ are $\Pi^1_2$, i.e.\ of the form $\forall \alpha \xi(\alpha) \to \exists \beta \psi(\alpha,\beta)$ where $\psi$ and $\xi$ are arithmetical.
\end{defi}

To make the proof of our main theorem work, we will need to make the contractions used in the proof explicit. For this, we use the following definition, together with two lemmas.

\begin{defi}
Let $\phi,\psi$ be formulas. We say that $\phi$ is \emph{contraction-similar to $\psi$} if there exists a sequence $\psi = \phi_0,\phi_1,\dots,\phi_n = \phi$ such that for every $1 \leq i \leq n$, $\phi_{i}$ is obtained from $\phi_{i-1}$ by replacing a subformula $\chi$ of $\phi_{i-1}$ by $\chi \wedge \chi$.

Furthermore, we say that a finite set $\Sigma$ of formulas is \emph{contraction-similar} to a finite set $\Gamma$ of formulas if there exists a surjection $f$ of $\Sigma$ onto $\Gamma$ such that every formula $\phi$ in $\Sigma$ is contraction-similar to $f(\phi)$. We will say that $\phi$ and $f(\phi)$ are \emph{linked}.

Finally, a sequent $\Sigma \seqto \phi$ is \emph{contraction-similar to $\Gamma \seqto \psi$} if $\Sigma$ is contraction-similar to $\Gamma$ and $\phi$ is contraction-similar to $\psi$.
\end{defi}

The proof of the first lemma is straightforward.

\begin{lem}\label{lem-con-sim-equiv}
If $\phi$ is contraction-similar to $\psi$, then $\phi \leftrightarrow \psi$ is provable in $\mathrm{IQC}$.
\end{lem}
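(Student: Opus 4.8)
The plan is to induct on the sequence $\psi = \phi_0, \phi_1, \dots, \phi_n = \phi$ witnessing contraction-similarity, so it suffices to handle a single step: if $\phi'$ is obtained from $\phi$ by replacing one subformula occurrence $\chi$ by $\chi \wedge \chi$, then $\phi \leftrightarrow \phi'$ is provable in $\mathrm{IQC}$. Once this single-step claim is established, the general case follows by chaining the biconditionals using transitivity of $\leftrightarrow$ in $\mathrm{IQC}$ (which is itself a routine cut-and-$(\wedge)$ argument), since $\phi_0 \leftrightarrow \phi_1$, $\phi_1 \leftrightarrow \phi_2$, \dots, $\phi_{n-1} \leftrightarrow \phi_n$ compose.

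For the single-step claim, I would first prove the base case $\chi \leftrightarrow \chi \wedge \chi$: the direction $\chi \seqto \chi \wedge \chi$ uses $(\wedge R)$ on two copies of the axiom $\chi \seqto \chi$ together with $(C)$ to merge the two antecedent copies, and the direction $\chi \wedge \chi \seqto \chi$ uses $(\wedge L)$ followed by weakening (or $(\wedge L)$ then $(C)$ then $(I)$), so both directions are derivable and hence $\seqto \chi \leftrightarrow \chi \wedge \chi$ via $({\to} R)$ and $(\wedge R)$. Then I would show, by a straightforward induction on the formula structure of $\phi$, that replacing a subformula $\chi$ by a provably-equivalent formula $\chi'$ (here $\chi' = \chi \wedge \chi$) preserves provable equivalence — i.e., a \emph{congruence} or \emph{replacement} lemma for $\mathrm{IQC}$. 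The inductive cases are the connectives and quantifiers present in our calculus ($\wedge$, $\to$, $\forall$, $\exists$, and the atomic/$\bot$ base cases where the replacement is either trivial or is the whole formula): for each, given $\chi \leftrightarrow \chi'$ one builds $\theta[\chi] \leftrightarrow \theta[\chi']$ using the corresponding left/right rules plus cuts, which is entirely mechanical.

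The only genuine subtlety — and the main thing to be careful about rather than a real obstacle — is the variable-convention bookkeeping: the replacement of $\chi$ by $\chi \wedge \chi$ duplicates any bound or free variables occurring in $\chi$, which interacts with the standing convention that every variable is bound at most once and no variable is both bound and free. One must therefore either tacitly allow renaming of bound variables in one of the two copies, or, more cleanly, observe that Lemma~\ref{lem-con-sim-equiv} is stated at the level of provability in $\mathrm{IQC}$ where we may freely $\alpha$-rename, so that the statement is really about formulas up to renaming. I would remark on this once and otherwise not belabour it. Everything else is bookkeeping of derivations, which is why the excerpt itself remarks that the proof is straightforward; I would keep the written proof to the base case $\chi \leftrightarrow \chi \wedge \chi$, a sentence invoking the replacement lemma, and the transitivity chaining.
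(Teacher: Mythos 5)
Your proof is correct and is precisely the ``straightforward'' argument the paper has in mind (the paper omits it entirely, stating only that it is straightforward). One small precision: in the calculus of Definition~\ref{defi-iqc} the identity rule $(I)$ is stated only for atomic $A$, so $\chi \seqto \chi$ for compound $\chi$ is not an axiom but a derived sequent (provable by the standard induction on $\chi$); this does not affect the argument. Your observation about the duplication of bound variables under the standing convention is a genuine point, and indeed the paper tacitly allows the same renaming in the proof of Lemma~\ref{lem-con-sim} (``renaming some bound variables, if necessary'').
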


On the other hand, as long as we replace our sequents by suitable contraction-similar sequents, we can transfer proofs from $\mathrm{EL}_0$ to $\mathrm{EL}_0^\mathrm{a}$. Below we will, however, also need that the proofs can be sufficiently cut free. Recall that a proof is \emph{free-cut free} if the principal formulas of any cut appearing in the proof are axioms, e.g.\ a proof in $\mathrm{EL}_0$ is free-cut free if every principal formula of a cut appearing in the proof is one of the axioms from Definition \ref{defi-el0}.

\begin{lem}\label{lem-con-sim}
Let $\Sigma$ be the set of formulas which are contraction-similar to an axiom of $\mathrm{EL}_0$.
If a $\Pi^1_2$-sequent $\Gamma \seqto \phi$ is provable in $\mathrm{EL}_0$, then there is a free-cut free proof in $\mathrm{EL}_0^\mathrm{a}$ from $\Sigma$ of some $\Pi^1_2$-sequent $\Gamma' \seqto \phi'$ which is contraction-similar to $\Gamma \seqto \phi$. The same holds if we replace $\mathrm{EL}_0$ by $\mathrm{EL}_0 + \mathrm{MP}$ and $\mathrm{EL}_0^\mathrm{a}$ by $(\mathrm{EL}_0 + \mathrm{MP})^\mathrm{a}$.
\end{lem}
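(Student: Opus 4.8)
The strategy is a double induction: first eliminate cuts down to a free-cut free proof in $\mathrm{EL}_0$ (respectively $\mathrm{EL}_0 + \mathrm{MP}$), and then, working bottom-up through that free-cut free proof, replace every contraction on a $\Pi^1_2$-formula by an explicit duplication of subformulas, carrying along a contraction-similar relabelling of all sequents. For the first step I would invoke the standard (free-)cut-elimination theorem for the sequent calculus underlying $\mathrm{EL}_0$: every cut whose cut-formula is not one of the non-logical axioms (SA), (QF-IA), (CON), (REC), (QF-AC$^{0,0}$), or the equality/primitive-recursive defining axioms can be pushed upward and removed, so one obtains a proof in which all cut-formulas are axioms. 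Since $\mathrm{MP}$ can be added as an axiom scheme, the same free-cut elimination applies to $\mathrm{EL}_0 + \mathrm{MP}$; this is where I would cite Buss \cite{buss-1998}. One subtlety: free-cut elimination may enlarge the end-sequent by weakening, but since our target only asks for \emph{some} contraction-similar $\Gamma' \seqto \phi'$ with the same conclusion up to contraction-similarity, the weakenings are harmless — indeed $(W)$ is still available in $\mathrm{EL}_0^\mathrm{a}$.

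\textbf{From the free-cut free proof to an affine proof.} Now I would process the free-cut free $\mathrm{EL}_0$-proof $\pi$ by induction on its height, producing for each sequent $\Delta \seqto \theta$ occurring in $\pi$ a contraction-similar sequent $\Delta' \seqto \theta'$ together with an $\mathrm{IQC}^\mathrm{a}$-derivation of it from $\Sigma$ (the contraction-similar variants of the axioms). For the axiom leaves, a $\mathrm{EL}_0$-axiom $\chi$ is its own contraction-similar variant and lies in $\Sigma$ by definition; an instance $A \seqto A$ of $(I)$ is handled by noting $A$ is atomic hence contraction-similar only to itself. For each logical rule I would do a case analysis: the one-premise rules $(\wedge L)$, $(\to R)$, $(\forall L)$, $(\forall R)$, $(\exists L)$, $(\exists R)$, $(\bot)$, $(W)$, $(P)$ translate directly by applying the same rule to the (already relabelled) premise, possibly after threading a few permutations; the two-premise rules $(\wedge R)$, $(\to L)$, $(\mathrm{Cut})$ need the two inductive contraction-similar conclusions to be made compatible, but since in a \emph{free-cut free} proof the only remaining cuts have axiom cut-formulas, those cut-formulas are already in $\Sigma$ and carry over verbatim, so there is no clash. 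The genuinely nontrivial rule is $(C)$: we have a contraction-similar $\Delta', \psi'_1, \psi'_2 \seqto \theta'$, where $\psi'_1$ and $\psi'_2$ are each contraction-similar to the contracted formula $\psi$; instead of contracting I keep both copies, so the corresponding occurrence in the new end-sequent $\Delta'$ has the extra formula $\psi'_1 \wedge \psi'_2$ (or simply both $\psi'_1,\psi'_2$ side by side), which is again contraction-similar to the single $\psi$ in $\Delta$ via the surjection $f$ sending both to $\psi$. One must check this keeps all sequents $\Pi^1_2$: replacing a subformula $\chi$ by $\chi \wedge \chi$ inside a $\Pi^1_2$-formula — in particular inside the arithmetical matrix $\xi$ or $\psi$ — again yields a $\Pi^1_2$-formula, and a side-by-side pair of $\Pi^1_2$-formulas in a context is fine, so $\Pi^1_2$-ness is preserved throughout. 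Finally, in the $\mathrm{MP}$-case one simply notes that every contraction-similar variant of an $\mathrm{MP}$-instance is in the corresponding $\Sigma$ for $(\mathrm{EL}_0 + \mathrm{MP})^\mathrm{a}$, so the same bookkeeping goes through verbatim.

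\textbf{Preserving free-cut freeness and the main obstacle.} Throughout the second step the only cuts introduced are copies of the cuts already present in $\pi$, whose principal formulas are axioms, so the resulting $\mathrm{IQC}^\mathrm{a}$-derivation from $\Sigma$ is again free-cut free (cuts on members of $\Sigma$ count as cuts on ``axioms'' of the system $\mathrm{EL}_0^\mathrm{a}$-over-$\Sigma$). I expect the main obstacle to be the bookkeeping in the contraction case combined with the binary rules: one must be careful that when two subderivations with independently chosen contraction-similar relabellings meet at a $(\wedge R)$, $(\to L)$ or $(\mathrm{Cut})$, the shared context $\Gamma$ (in $(\wedge R)$ there is no shared context, but in $(\to L)$ and $(\mathrm{Cut})$ one side has $\Gamma_2$ and the two principal occurrences must line up) is relabelled consistently — this is exactly why we must work with the \emph{free-cut free} proof, where the only formulas that genuinely need to ``match'' across a binary rule are the axiom cut-formulas, which admit only the trivial relabelling. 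A secondary technical point is the eigenvariable conditions in $(\forall R)$ and $(\exists L)$: since contraction-similarity only touches propositional structure and never introduces or captures variables, the eigenvariable conditions of $\pi$ transfer unchanged, so no renaming of bound or free variables is needed. Assembling these cases by induction on the height of $\pi$ gives the claimed $\Gamma' \seqto \phi'$, contraction-similar to $\Gamma \seqto \phi$ and $\Pi^1_2$, with a free-cut free proof in $\mathrm{EL}_0^\mathrm{a}$ from $\Sigma$ (and likewise for $\mathrm{MP}$), as required.
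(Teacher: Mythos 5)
Your overall skeleton — induct on a free-cut free proof and replace each contraction by an explicit duplication — is the same as the paper's. But there is a genuine gap in your treatment of the contraction rule itself. You default to collapsing the two copies $\psi'_1,\psi'_2$ into a single conjunction $\psi'_1 \wedge \psi'_2$, with ``side by side'' mentioned only as a parenthetical alternative, and you assert that ``replacing a subformula $\chi$ by $\chi \wedge \chi$ inside a $\Pi^1_2$-formula \dots again yields a $\Pi^1_2$-formula.'' That assertion is only true when the duplicated subformula lies strictly inside an arithmetical matrix. If the contracted formula contains an existential function quantifier — for instance if $\psi$ is an entire $\Pi^1_2$-hypothesis $\forall \alpha\, \xi \to \exists \beta\, \chi$ from $\Gamma$, or the subformula $\exists \beta\, \chi$ appearing on its own in a context after $\to L$ — then folding the two copies under a single $\wedge$ creates a formula whose consequent is a conjunction of existential function quantifiers, which is no longer $\Pi^1_2$, so the final end-sequent would fail the requirement in the statement. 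This is precisely why the paper splits into two cases: when the contracted formula has no existential function quantifier, use $\zeta \wedge \xi$; when it does, keep $\zeta$ and $\xi$ as two separate formulas and use the surjection $f$ to declare them linked to $\psi$.

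The side-by-side option forces a second piece of bookkeeping that your proposal omits: once two formulas $\zeta,\xi$ in the converted antecedent are both linked to the same $\psi$, every subsequent rule whose principal formula traces back to $\psi$ must be applied to \emph{all} linked copies, not just once. Your claim that the one-premise rules ``translate directly by applying the same rule to the (already relabelled) premise'' is therefore too quick. The paper handles this explicitly in the $\forall \alpha_i L$ case (list all $\zeta_1,\dots,\zeta_n$ linked to the principal formula and apply the rule to each) and in the ${\to} L$ case when the principal formula has a function quantifier. Without this propagation step, the relabelled proof tree would not actually be a valid derivation, and without the case split in the contraction step, the end-sequent would not be $\Pi^1_2$. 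Your observation about $\Pi^1_2$-preservation needs to be replaced by this dichotomy, and the rule cases need to account for multiply-linked formulas; otherwise the rest of the argument (cut-formulas in $\Sigma$, eigenvariable conditions, the $\mathrm{MP}$ variant) is sound and matches the paper.
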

\begin{proof}
We prove this by induction on a free-cut free proof of $\Gamma \seqto \phi$.

The crucial step is when the last step in the proof is an instance of the contraction rule.
Thus, we have a proof of $\Gamma,\psi,\psi \seqto \phi$. Let us first assume that $\psi$ does not contain an existential function quantifier. By the induction hypothesis, we therefore have a proof of some contraction-similar sequent $\Gamma',\zeta,\xi \seqto \phi'$ where $\zeta$ and $\xi$ are contraction-similar to $\psi$. But then we also have a proof of $\Gamma',\zeta \wedge \xi \seqto \phi'$ (renaming some bound variables, if necessary), and this sequent is contraction-similar to $\Gamma,\psi \seqto \phi$.

If $\psi$ does contain an existential function quantifier, then $\Gamma',\zeta,\xi \seqto \phi'$ is already contraction-similar to $\Gamma,\psi \seqto \phi$, if we declare $\zeta$ and $\xi$ to be linked.

Let us also consider the case when the last step in the proof is an instance of the $\forall \alpha_i L$ rule. In this case, list the formulas $\zeta_1,\dots,\zeta_n$ which are linked to the principal formula $\psi$, and apply an instance of the $\forall \alpha_i L$ rule to each $\zeta_i$.

Next, consider the case when the last step in the proof is an instance of ${\to} L$, and the principal formula $\psi$ contains a function quantifier. In this case, the proof is similar to the one given in the previous paragraph, so we omit the details.

Finally, when we encounter a cut on an axiom of $\mathrm{EL}_0$, we replace this by a cut on a formula from $\Sigma$.
\end{proof}
 
We are now going to define our realisability notion.
That is, we are going to define a binary formula
\[\lgodel (v,w) \rgodel \realises \lgodel \Gamma \seqto \phi\rgodel,\]
which we will also write as
\[(v,w) \realises \Gamma \seqto \phi.\]
To do so, we first need to define what it means to be realised at some sequence $(f_1,\dots,f_{|v|},a_1,\dots,a_{|w|})$, of which we will need a positive and negative version.
We do this by recursion on the formulas (which we can do in $\mathrm{EL}_0$, using the recursion operator).

\begin{defi}
We simultaneously define
\[(v,w),(f_1,\dots,f_{|v|},a_1,\dots,a_{|w|}) \realisesp \phi\]
and
\[(v,w),(f_1,\dots,f_{|v|},a_1,\dots,a_{|w|}) \realisesm \phi\]
recursively as follows (where $s \in \{+,-\}$ and $Q \in \{\exists,\forall\}$):
\begin{enumerate}
\item If $\phi$ is atomic, then
\[(v,w),(f_1,\dots,f_{|v|},a_1,\dots,a_{|w|}) \realisess \phi\]
is the conjunction of the formula which expresses that (the universal closure of) $\phi$ is true, that $\phi$ has both free and bound second-order variables contained in $\{\alpha_1,\dots,\alpha_{|v|}\}$ and first-order variables contained in $\{x_1,\dots,x_{|w|}\}$, and of the formula which expresses that $\phi[\alpha_i := f_i, x_j := a_j]$ is true.

\Item
\[(v,w),(f_1,\dots,f_{|v|},a_1,\dots,a_{|w|}) \realisess \phi \wedge \psi\]
if (the universal closure of) $\phi \wedge \psi$ is true,
\[(v,w),(f_1,\dots,f_{|v|},a_1,\dots,a_{|w|}) \realisess \phi\]
and
\[(v,w),(f_1,\dots,f_{|v|},a_1,\dots,a_{|w|}) \realisess \psi.\]

\Item
\[(v,w),(f_1,\dots,f_{|v|},a_1,\dots,a_{|w|}) \realisess \phi \to \psi\]
if, $\phi \to \psi$ is true, and whenever
\[(v,w),(f_1,\dots,f_{|v|},a_1,\dots,a_{|w|}) \realisest \phi\]
we have 
\[(v,w),(f_1,\dots,f_{|v|},a_1,\dots,a_{|w|}) \realisess \psi,\]
where $t$ is the opposing sign of $s$.

\Item
\[(v,w),(f_1,\dots,f_{|v|},a_1,\dots,a_{|w|}) \realisesp \exists x_i \phi(x_i)\]
if
$\exists x_i \phi(x_i)$ is true,
\[(v,w),(f_1,\dots,f_{|v|},a_1,\dots,a_{|w|}) \realisesp \phi(x_i),\]
and $(f_1,\dots,f_{|v|},a_1,\dots,a_{|w|})$ is strongly valid for $(v,w)$ at $w(i)$.

\Item
\[(v,w),(f_1,\dots,f_{|v|},a_1,\dots,a_{|w|}) \realisesp \forall x_i \phi(x_i)\]
if $\forall x_i \phi(x_i)$ is true and
\[(v,w),(f_1,\dots,f_{|v|},a_1,\dots,a_{|w|}) \realisesp \phi(x_i).\]

\Item
\[(v,w),(f_1,\dots,f_{|v|},a_1,\dots,a_{|w|}) \realisesm Q x_i \phi(x_i)\]
if
$Q x_i \phi(x_i)$ is true and
\[(v,w),(f_1,\dots,f_{|v|},a_1,\dots,a_{|w|}) \realisesm \phi(x_i).\]

\Item
\[(v,w),(f_1,\dots,f_{|v|},a_1,\dots,a_{|w|}) \realisesp \exists \alpha_i \phi(\alpha_i)\]
if $\exists \alpha_i \phi(\alpha_i)$ is true,
\[(v,w),(f_1,\dots,f_{|v|},a_1,\dots,a_{|w|}) \realisesp \phi(\alpha_i),\]
and $(f_1,\dots,f_{|v|},a_1,\dots,a_{|w|})$ is strongly valid for $(v,w)$ at $v(i)$,

\Item
\[(v,w),(f_1,\dots,f_{|v|},a_1,\dots,a_{|w|}) \realisesp \forall \alpha_i \phi(\alpha_i)\]
if $\forall \alpha_i \phi(\alpha_i)$ is true and
\[(v,w),(f_1,\dots,f_{|v|},a_1,\dots,a_{|w|}) \realisesp \phi(\alpha_i).\]

\Item
\[(v,w),(f_1,\dots,f_{|v|},a_1,\dots,a_{|w|}) \realisesm Q \alpha_i \phi(\alpha_i)\]
if $Q \alpha_i \phi(\alpha_i)$ is true and
\[(v,w),(f_1,\dots,f_{|v|},a_1,\dots,a_{|w|}) \realisesm \phi(\alpha_i).\]
\end{enumerate}
\end{defi}

\begin{defi}
Given a sequent $\Gamma \seqto \phi$ we say that $(v,w)$ is \emph{monotone} for $\Gamma \seqto \phi$ if:
\begin{itemize}
\item For every subformula of $\Gamma$ or $\phi$ of the form $Q x_i \psi$, if $x_j$ is bound in $\psi$, then $w_1(i) < w_1(j)$, and if $\alpha_j$ is bound in $\psi$, then $w_1(i) < v_1(j)$.
\item For every subformula of $\Gamma$ or $\phi$ of the form $Q \alpha_i \psi$, if $x_j$ is bound in $\psi$, then $v_1(i) < w_1(j)$, and if $\alpha_j$ is bound in $\psi$, then $v_1(i) < v_1(j)$.
\end{itemize}

Furthermore, we say that $(v,w)$ is \emph{universal} for $\Gamma \seqto \phi$ if for all $x_i$ bound by a universal quantifier in a positive position (see Buss \cite[1.2.10]{buss-1998}) or by an existential quantifier in a negative position we have that $w(i) \in \omega$, and similarly for such $\alpha_i$ that $v(i) \in \omega$.

We now let
\[(v,w) \realises \Gamma \seqto \phi\]
be the formula expressing that $v$ and $w$ are monotone and universal for $\Gamma \seqto \phi$, and that for all sequences $(f_1,\dots,f_{|v|},a_1,\dots,a_{|w|})$ which are valid for $(v,w)$ we have that, if
\[(v,w),(f_1,\dots,f_{|v|},a_1,\dots,a_{|w|}) \realisesm \psi\]
holds for all $\psi \in \Gamma$, then
\[(v,w),(f_1,\dots,f_{|v|},a_1,\dots,a_{|w|}) \realisesp \phi\]
also holds.

Finally, if $\Gamma \seqto \phi$ is a $\Pi^1_2$-sequent, say
\[\Gamma = \forall \alpha_3 \xi_1(\alpha_3) \to \exists \alpha_4 \psi_1(\alpha_3,\alpha_4),\dots,\forall \alpha_{2n+1} \xi_{n}(\alpha_{2n+1}) \to \exists \alpha_{2n+2} \psi_{1}(\alpha_{2n+1},\alpha_{2n+2})\]
and $\phi = \forall \alpha_1 \xi_0(\alpha_1) \to \exists \alpha_2 \psi_0(\alpha_1,\alpha_2)$, then we say that \emph{$(v,w)$ strongly realises} $\Gamma \seqto \phi$ if, when we let $m$ be maximal such that $v_1(2m)  \leq v_1(2)$, then
\begin{itemize}
\item $v_1(3) < v_1(5) < \dots < v_1(2m-1)$;
\item $(v,w) \realises \Gamma \seqto \phi$;
\item $v_1(i) < w_1(j)$ whenever $w_1(j) > 0$ and $1 \leq i \leq m$;
\item for all sequences $(f_1,\dots,f_{|v|},a_1,\dots,a_{|w|})$ which are valid for $(v,w)$ and for all $1 \leq i \leq m$ we have that, if
$\xi_0(f_1)$ is true, and
$\psi_j(f_{2j+1},f_{2j+2})$ is true for every $1 \leq j < i$, then
$\xi_i(f_{2i+1})$ is true, and $(f_1,\dots,f_{|v|},a_1,\dots,a_{|w|})$ is strongly valid for $(v,w)$ at $v(2i+1)$.
\item for all sequences $(f_1,\dots,f_{|v|},a_1,\dots,a_{|w|})$ which are valid for $(v,w)$, if
$\xi_0(f_1)$ is true, and
$\psi_j(f_{2j+1},f_{2j+2})$ is true for every $1 \leq j \leq n$, then
$(f_1,\dots,f_{|v|},a_1,\dots,a_{|w|})$ is strongly valid for $(v,w)$ at $v(2)$.
\end{itemize}
\end{defi}

Now that we have given the definition of our realisability notion, we will show that sequents which are provable in $\mathrm{EL}_0^\mathrm{a}$ have a realiser, and even provably so.

\begin{thm}\label{thm-realiser}
Let $\Sigma$ be the set of formulas which are contraction-similar to an axiom of $\mathrm{EL}_0$.
Let $\Gamma \seqto \phi$ be a sequent provable in $\mathrm{EL}_0^\mathrm{a}$ from $\Sigma$ by a free-cut free proof. Then there is a realiser $(v,w)$ of $\Gamma \seqto \phi$, and in fact $\mathrm{RCA}_0$ proves that $(v,w) \realises \Gamma \seqto \phi$.

The same holds if we replace $\mathrm{EL}_0$ by $\mathrm{EL}_0 + \mathrm{MP}$ and $\mathrm{EL}_0^\mathrm{a}$ by $(\mathrm{EL}_0 + \mathrm{MP})^\mathrm{a}$
\end{thm}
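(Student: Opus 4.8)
The plan is to induct on the given free-cut free proof of $\Gamma \seqto \phi$ in $\mathrm{EL}_0^\mathrm{a}$ (resp.\ $(\mathrm{EL}_0+\mathrm{MP})^\mathrm{a}$) from $\Sigma$, constructing the realiser $(v,w)$ by recursion on the derivation and simultaneously writing down an $\mathrm{RCA}_0$-proof that it realises the end-sequent. Two auxiliary observations will be used throughout. First, by a straightforward induction on formulas, any sequence that positively realises a formula at $(v,w)$ also negatively realises it; this lets us move formulas between the antecedent and the succedent, in particular when we cut on $\Sigma$-formulas. Second, realisability is stable under compatible renumbering: if $(v',w')$ is obtained from $(v,w)$ by injectively relabelling the index set, possibly padding with fresh plain-number entries and increasing the bookkeeping numbers $k$ in a monotonicity-respecting way, then a realiser stays a realiser — here self-containedness is what guarantees that handing a Turing functional extra, unused inputs does no harm. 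It is the absence of $(C)$ that makes all of this coherent: each hypothesis of each premise is consumed exactly once, so when we amalgamate the index sets of two premises the dependency order between the entries of $v$ and $w$ stays acyclic, which is precisely the layered structure of Figure \ref{figure-1}.

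For the leaves: the identity axiom $A \seqto A$ with $A$ atomic is realised by the pair that lists the variables of $A$ with plain-number entries. For a $\sigma \in \Sigma$, i.e.\ a formula contraction-similar to an axiom of Definition \ref{defi-el0}, we realise $\seqto \sigma$ directly. The axioms $(\mathrm{SA})$, $(\mathrm{CON})$, $(\mathrm{REC})$, the equality axioms and the defining axioms of the primitive recursive symbols are quantifier-free, so realising them only amounts to asserting their truth together with the variable-containment bookkeeping, which $\mathrm{RCA}_0$ verifies. For $(\mathrm{QF}\text{-}\mathrm{IA})$, since $B$ is quantifier-free its positive and negative realisers coincide, so realising $\forall x(B(x)\to B(Sx))\to\forall y(B(0)\to B(y))$ reduces to ordinary induction for $B$, available in $\mathrm{RCA}_0$. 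For $(\mathrm{QF}\text{-}\mathrm{AC}^{0,0})$ we use that, given the negative realiser of $\forall x\exists y\,B(x,y)$ (which in particular makes $\forall x\exists y\,B(x,y)$ true under the substitution of the free function parameters), the function $x\mapsto\mu y.\,B(x,y)$ is total and computable from those parameters; the normal form theorem (Theorem \ref{thm-normal-form}) supplies an index $e$ for it, and we set the $v$-entry for the existentially quantified $\alpha$ to $(k,e)$ with $k$ large enough to see exactly those parameters, giving the required strong validity. In the $(\mathrm{EL}_0+\mathrm{MP})^\mathrm{a}$ case we additionally realise instances $\neg\neg\exists x\,B(x)\to\exists y\,B(y)$ of $\mathrm{MP}$ the same way: $\mu y.\,B(y)$ is a computable witness, and its totality under the hypothesis is exactly $\mathrm{MP}$ for $B$, which is trivially available classically in $\mathrm{RCA}_0$. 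Contraction-similar variants of all of these are treated by distributing the realiser over the duplicated subformulas, using Lemma \ref{lem-con-sim-equiv}.

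For the inductive step we go through the rules of Definition \ref{defi-iqc} other than $(C)$. The structural rules $(W)$, $(P)$, $(\wedge L)$, $(\wedge R)$, $(\to R)$, $(\forall R)$, $(\exists L)$ and $(\bot)$ are handled by carrying the realiser(s) through with the obvious renumbering of $v$ and $w$ and checking that monotonicity and universality are preserved (for $(\forall R)$ and $(\exists L)$ the eigenvariable receives a plain-number entry, matching universality; for $(\bot)$ one notes that $\bot$ is never realised, so the premise's realiser vacuously realises the conclusion after padding). For $(\forall L)$ and $(\exists R)$, which substitute a term $t$ for the quantified variable, we apply Theorem \ref{thm-normal-form} to $t$ to obtain an index $e$ computing its value from the variables occurring in $t$, and set the corresponding entry of $v$ or $w$ to $(k,e)$ with $k$ chosen minimal subject to monotonicity and to making all variables of $t$ available; in the $(\exists R)$ case this entry provides the strong-validity clause of the positive realiser of $\exists x\,\phi$ (or $\exists\alpha\,\phi$). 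The remaining cases are $(\to L)$ and cuts on $\Sigma$-formulas, and they are the crux: here we amalgamate the realisers $(v^1,w^1)$ and $(v^2,w^2)$ of the two premises into a single $(v,w)$ on the disjoint union of their index sets, renumbered so that every entry coming from the first premise precedes every non-trivial entry coming from the second, with the bookkeeping numbers of the second premise shifted upward accordingly. Then, given a valid sequence for $(v,w)$ negatively realising the antecedent of the conclusion: its first-premise part negatively realises $\Gamma_1$, hence (first realiser, plus renumbering-stability) positively realises $\psi_1$; since $\psi_1\to\psi_2$ is negatively realised, $\psi_2$ is negatively realised; together with the negatively realised $\Gamma_2$ and the second realiser, $\phi$ is positively realised. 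The cut case is identical with $\sigma$ playing the role of both $\psi_1$ and $\psi_2$, using the positive-implies-negative observation to feed the positively realised $\sigma$ into the context of the second premise.

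The main obstacle is exactly the amalgamation in the $(\to L)$ and cut steps: one has to choose the renumbering and the shifted bookkeeping numbers so that monotonicity and universality are simultaneously preserved for the new end-sequent, so that every validity and strong-validity assertion carried by the premises' realisers survives the relabelling (this is where self-containedness, the unused-input property, is essential), and so that the whole construction is explicit enough that $\mathrm{RCA}_0$ can verify the resulting realisability claim. Getting this bookkeeping right — rather than any deep mathematical content — is where the work lies, and it is also where the affine restriction on $(C)$ is indispensable, since a contraction would force a functional computed in the second premise to be allowed to read a function it helps to produce, destroying the acyclicity that makes $(v,w)$ well defined.
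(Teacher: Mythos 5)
Your overall strategy --- induction on a free-cut free proof in the affine system, carrying a realiser together with an $\mathrm{RCA}_0$ verification, amalgamating at the two-premise rules --- matches the paper's, and your handling of the axioms, the propositional and quantifier rules, weakening, permutation, $(\bot)$, and ordinary $({\to}L)$ is essentially right. The positive-implies-negative observation you lean on is also correct (a straightforward induction on formulas). However, the treatment of cuts on $\Sigma$-formulas has a genuine gap, and it is exactly where the paper spends most of its effort.

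The problem is that for a $\Sigma$-formula with bound variables --- an instance of QF-IA, QF-AC$^{0,0}$, or $\mathrm{MP}$ --- the amalgamation you propose is not even well defined. Consider a cut on $\sigma = \forall x_c\exists x_d B(x_c,x_d) \to \exists\alpha_i\forall x_b B(x_b,\alpha_i(x_b))$. The realiser for the left premise $\seqto\sigma$ must place the Skolem index at $\alpha_i$, i.e.\ $v^1(i)=(k,e)\in\omega\times\omega$: this is forced because $\alpha_i$ is existentially bound in positive position and positive realisation of $\exists\alpha_i(\dots)$ requires strong validity at $v(i)$, hence a pair entry. But in the right premise $\Gamma_2,\sigma\seqto\eta$ the same $\alpha_i$ now sits in \emph{negative} position (inside the antecedent $\sigma$), so universality of $(v^2,w^2)$ forces $v^2(i)\in\omega$. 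The two entries at index $i$ have incompatible types, so no renumbering or shifting of bookkeeping integers can merge them, and positive-implies-negative never gets to act because there is no merged $(v,w)$ to apply it to. (The conflict disappears precisely for those $\Sigma$-axioms without bound variables --- SA, CON, REC, equality and defining axioms --- which is why the uniform treatment of those cuts is trivial.) The paper therefore abandons amalgamation in these cases: for QF-AC$^{0,0}$ and $\mathrm{MP}$ it first moves the cut upward until $\sigma$ is principal in a $({\to}L)$ of the right branch, then \emph{rewires} every index in the realiser of $\Delta',\exists\alpha_i\forall x_b B\seqto\eta$ that reads $\alpha_i$ so that it internally recomputes $\alpha_i$ via the Skolem index, after which $v(i)$ and $w(b)$ are zeroed out; for QF-IA it discards the left premise entirely, zeroes the bound-variable entries in the right realiser, and proves realisability directly from the truth of induction in $\mathrm{RCA}_0$ together with a sequence-patching argument (this is where self-containedness is actually used). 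A secondary issue: Lemma \ref{lem-con-sim-equiv} cannot be invoked to transfer realisers across contraction-similar axioms, since $\realises$ is not invariant under intuitionistic equivalence; the paper instead propagates these transformations through the duplicated subformulas case by case.
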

\begin{proof}
We prove this using induction on free-cut free proofs. In fact, we prove something slightly stronger: we even show that we can take $v(i)=0$ for non-bound variables $\alpha_i$ and $v(j) = 0$ for non-bound variables $x_j$. Furthermore, we prove that we can assume that if some variable $\alpha_i$ is bound we have that $v_1(i) > 0$, and similarly if some variable $x_j$ is bound then $w_1(j) > 0$. Finally, we will prove that we can take $(v,w)$ to be self-contained.

\begin{enumerate}
\item Identity $(I)$ or equality axioms\footnote{We assume the equality axioms are formulated without quantifiers, which we can do because they are universal.}: we can take $v$ and $w$ to be constantly $0$ on a domain which is large enough to contain all variables.
\item Weakening: we can obtain a realiser $(v',w')$ for the lower sequent by extending the realiser $(v,w)$ for the upper sequent to a big enough domain, and by choosing suitable values for $v'(i)$ and $w'(j)$ for bound variables occurring in the principal formula so as not to violate the extra conditions imposed in the induction; and letting $v'(i) = v(i)$ and $w'(j) = w(j)$ for other $i \leq |v|$ and $j \leq |w|$, and finally letting $v'(i) = 0$ and $w'(j) = 0$ for $i > |v|, j > |w|$ which do not represent any bound variables in the principal formula. It is not hard to see that this can be done and we omit the details.
\item Permutation: the realiser for the upper sequent is also a realiser for the lower sequent.
\item All non-quantifier logical rules with one upper sequent: we can use the same realiser for the lower sequent as for the upper sequent.
\item \label{step-imp} All non-quantifier logical rules with two upper sequents: since no variable occurs twice, and since non-bound variables are mapped to $0$, we can first restrict the realiser $(v^1,w^1)$ that we get from the induction hypothesis for the left upper sequent to the variables occurring in the left upper sequent, and the realiser $(v^2,w^2)$ for the right upper sequent to the variables occurring in the right upper sequent, and then their domains only contain free variables on which they agree. Then as a realiser $(v',w')$ for the lower sequent we can just take the union $(v^1 \cup v^2,w^1 \cup w^2)$ of these realisers with a restricted domain, where we add zeroes to extend their domains to an initial segment of $\omega$.\footnote{Note that this does not work for the usual $\vee L$ rule for intuitionistic logic, since there is an implicit contraction happening in the application of this rule. However, our logic does not contain this rule because of our choice of interpreting $\phi \vee \psi$ as $\exists x (x = 0 \to \phi \wedge x \not= 0 \to \psi)$.}

Here, let us note that we have a lot of extra freedom in choosing our realiser $(v',w')$ of the lower sequent. For example, we could assume that all non-zero values of $v^1_1,w^1_1$ are strictly above the non-zero values of $v^2_1,w^2_1$ by scaling them before taking the union. We will use this fact below.

\item $\exists x_u R$: let $(v,w)$ realise the upper sequent. Now, let $i_1,\dots,i_s$ list those $1 \leq i \leq |v|$ with $v(i) = 0$ in increasing order, and let $j_1,\dots,j_t$ list those $1 \leq j \leq |w|$ with $w(j) = 0$ in increasing order.
Note that for every free variable $\alpha_i$ we have $v(i) = 0$, and that for every free variable $x_j$ we have $w(j) = 0$. So, let $t(\alpha_{i_1},\dots,\alpha_{i_s},x_{j_1},\dots,x_{j_t})$ be the term which is captured by $\exists x_j R$. By Theorem \ref{thm-normal-form} we can find an index $e$ for the Turing functional mapping $(f_1,\dots,f_s,a_1,\dots,a_t) \in (\omega^\omega)^s \times \omega^t$ to $t(f_1,\dots,f_s,a_1,\dots,a_t)$. Now, given a realiser $(v,w)$ for the upper sequent, we let $w'(u) = (1,e)$, we let $v_1'(i) = v_1(i) + 1$ if $v_1(i) \not= 0$ and we let $w_1'(j) = w_1(j) + 1$ if $w_1(j) \not= 0$ and $j \not= u$, and we let $v'$ and $w'$ agree with $v$ and $w$ otherwise.
\item $\forall x_u L$:  this step is analogous to $\exists x_u R$.
\item $\forall x_u R$: let $(v,w)$ realises the upper sequent. Now we let $w'(u) = 1$, we let $v_1'(i) = v_1(i) + 1$ if $v_1(i) \not= 0$ and we let $w_1'(j) = w_1(j) + 1$ if $w_1(j) \not= 0$ and $j \not= u$, and we let $v'$ and $w'$ agree with $v$ and $w$ otherwise. Then $(v',w')$ realises the lower sequent.
\item $\exists x_u L$: this step is analogous to $\forall x_u R$.

\item $\exists \alpha_u R$: this step is analogous to $\exists x_u R$.
\item $\forall \alpha_u L$: this step is analogous to $\exists \alpha_u R$.
\item $\forall \alpha_u R$: this step is analogous to $\forall x_u R$.
\item $\exists \alpha_u L$: this step is analogous to $\forall \alpha_u R$.
\item An axiom which is an element of $\Sigma$: let us consider an instance of QF-AC$^{0,0}$; the other formulas are similar (cf.\ the induction steps for cuts below). Thus, consider the formula
$\phi = \forall x_c \exists x_d \psi(x_c,x_d) \to \exists \alpha_i \forall x_b \psi(x_b,\alpha_i(x_b))$. 
Then we get a realiser $(v,w)$ of $\seqto \phi$ by setting $w(c) = 3$, $w(d) = 4$, $w(i) = (1,e)$ and $w(b) = 2$, where $e$ is an index for the computable function which, on input $x_b$, computes the least $m$ such that $\psi(x_b,m)$ holds, which we can find because $\psi$ is quantifier-free. Note that $e$ is an index for a total computable function if $\forall x_c \exists x_d \psi(x_c,x_d)$ is true, which shows that $(v,w)$ realises $\seqto \phi$.

\item Cuts on a cut formula which is not contraction-similar to an instance of $\mathrm{IA}_0$, QF-AC$^{0,0}$ or $\mathrm{MP}$: it can be directly verified that every realiser of the right upper sequent also realises the lower sequent, using the fact that all axioms which are not instances of $\mathrm{IA}_0$ or QF-AC$^{0,0}$ do not have any bound variables.

\item Cuts on a cut formula which is contraction-similar to an instance of IA$_0$: let us first consider the case where the cut formula is actually an instance of IA$_0$. Thus, let us say that the cut formula is of the form
\[\phi = \forall x_i (\psi(x_i) \to \psi(S(x_i))) \to \forall x_j(\psi(0) \to \psi(x_j)),\]
where $\psi$ is quantifier-free. Let $(v,w)$ realise the right upper sequent, let $w'(i) = w'(j) = 0$, and let $v'(s) = v(s)$ and $w'(s) = w(s)$ otherwise.

We claim that $(v',w')$ realises the lower sequent. In fact, we argue that $(v',w')$ negatively realises $\phi$, and after that we argue that this is enough to prove the claim.
Let us reason within EL$_0$.
Let $(f_1,\dots,f_{|v|},a_1,\dots,a_{|w|})$ be valid for $(v',w')$. Assume $(v',w')$ positively realises $\forall x_i(\psi(x_i) \to \psi(S(x_i)))$ at the sequence $(f_1,\dots,f_{|v|},a_1,\dots,a_{|w|})$. In particular, it is true. So, using induction we get that $\forall x_j(\psi(0) \to \psi(x_j))$ is true. In particular, $\psi_0 \to \psi(a_j)$ holds, and since $\psi$ is quantifier-free this means that $\forall x_j(\psi_0 \to \psi(x_j))$ is negatively realised by $(v',w')$ at $(f_1,\dots,f_{|v|},a_1,\dots,a_{|w|})$. So, $(v',w')$ negatively realises $\phi$.

Now, if $\sigma = (f_1,\dots,f_{|v|},a_1,\dots,a_{|w|})$ is valid for $(v',w')$, then we can turn this into a sequence $\sigma'$ which is valid for $(v,w)$ by replacing $a_j$ with the number computed from the other elements of $\sigma$ using $\Phi_{w_2(j)}$ if $w(j) \in \omega \times \omega$ and this computation converges (here we use that $(v,w)$ is self-contained). Furthermore, $\sigma'$ remains valid for $(v',w')$.
Let $\Delta \seqto \eta$ be the conclusion of the application of the cut rule.
Then, if $(v',w')$ negatively realises $\Delta$ at $\sigma$, so does $(v,w)$ at $\sigma'$ (again, using that $(v,w)$ is self-contained), and we just argued that it negatively realises $\phi$ at $\sigma'$ as well, so by our choice of $(v,w)$ it positively realises $\eta$ at $\sigma'$. We therefore see that $(v',w')$ positively realises $\eta$ at $\sigma$, as desired.

Now, if instead we only have that the cut formula is contraction-similar to an instance of IA$_0$, this means that at least one subformula $\chi$ has been replaced by $\chi \wedge \chi$. Let us assume there is exactly one such subformula; the general case then follows by induction. If $\chi$ is quantifier-free, we can use exactly the same proof as we gave in the previous paragraphs. If $\chi$ is an instance of IA$_0$, we get the result by applying the transformation we just described two times, since we can see $\chi \wedge \chi$ as two instances of IA$_0$. If $\chi = \forall x_i(\psi(x_i) \to \psi(S(x_i)))$, i.e.\ the cut formula is of the form
\begin{align*}
\phi = &(\forall x_i (\psi(x_i) \to \psi(S(x_i))) \wedge \forall x_{i'} (\psi(x_{i'})\\
&\to \psi(S(x_{i'})))) \to \forall x_j(\psi(0) \to \psi(x_j)),
\end{align*}
the only changes we need to make is to make $w'(i') = 0$. The case where $\chi = \forall x_j(\psi(0) \to \psi(x_j))$ is similar. This covers all the cases.

\item Cuts on a cut formula which is contraction-similar an instance of QF-AC$^{0,0}$: again, let us first say that the cut formula is actually an instance of QF-AC$^{0,0}$, i.e.\ of the form
\[\phi = \forall x_c \exists x_d \psi(x_c,x_d) \to \exists \alpha_i \forall x_b \psi(x_b,\alpha_i(x_b)),\]
where $\psi$ is quantifier-free; the general case follows as in the previous case.
Without loss of generality, the last step in the proof of the right upper sequent had $\phi$ as its principal formula, since we can always move the cuts upwards in the proof tree as much as possible. Furthermore, if $\phi$ was introduced in the right upper sequent through a weakening (i.e.\ $\phi$ was \emph{only weakly introduced}, see \cite[2.4.4.2]{buss-1998}), we can remove the weakening and the cut rule from the proof.
Thus, we may assume we have a proof of the form
\begin{prooftree}
\AxiomC{$\phi$}
\AxiomC{$\Delta \seqto \forall x_c \exists x_d \psi(x_c,x_d)$}
\AxiomC{$\Delta', \exists \alpha_i \forall x_b \psi(x_b,\alpha_i(x_b)) \seqto \eta$}
\BinaryInfC{$\Delta,\Delta',\phi \seqto \eta$}
\BinaryInfC{$\Delta,\Delta' \seqto \eta$}
\end{prooftree}
Let $(v,w)$ be a realiser for $\Delta', \exists \alpha_i \forall x_b \psi(x_b,\alpha_i(x_b)) \seqto \eta$. Let $e$ be an index for the functional which, given valuations for the free variables in $\psi(x_c,x_d)$ different from $x_c$ and $x_d$, on input $x_c$, looks for the least value $m$ such that $\psi(x_c,m)$ holds. We can do this because $\psi$ is quantifier free. Now for every $j \not= i$, if $v(j)$ is of the form $(k,e')$, find a new index $e''$ which ignores the input $\alpha_i$ and replaces it by $\Phi_e$; and do the same thing for $w(j)$ for every $j \not= b$. Finally, let $v'(i) = w'(b) = 0$.

We claim: $(v',w')$ realises $\Delta,\Delta' \seqto \eta$. Let $(f_1,\dots,f_{|v'|},u_1,\dots,u_{|w'|})$ be valid for $(v',w')$. Assume $(v',w')$ negatively realises $\Delta$ and $\Delta'$ at this sequence.
Then, in particular, all formulas in $\Delta$ are true, so $\forall x_c \exists x_d \psi(x_c,x_d)$ is true and therefore $\Phi_e$ is total. Furthermore, since it negatively realises $\Delta'$, it is now not hard to argue that, by replacing $a_b$ in a similar way as in the previous case, we have that $(v,w)$ positively realises $\eta$, as desired.

\item Cuts on a cut formula which is contraction-similar to an instance of MP: let us say that the cut formula is of the form $\phi = \neg\neg \exists x_b \psi(x_b) \to \exists x_c \psi(x_c)$, where $\psi$ is quantifier-free; the general case follows in the same way as above.
Again, without loss of generality, the last step in the proof of the right upper sequent had $\phi$ as its principal formula; thus, we have a proof of the form
\begin{prooftree}
\AxiomC{$\seqto \phi$}
\AxiomC{$\Delta \seqto \neg\neg \exists x_b \psi(x_b)$}
\AxiomC{$\Delta', \exists x_c \psi(x_c) \seqto \eta$}
\BinaryInfC{$\Delta,\Delta',\phi \seqto \eta$}
\BinaryInfC{$\Delta,\Delta' \seqto \eta$}
\end{prooftree}

As before, let $(v,w)$ realise $\Delta', \exists x_c \psi(x_c) \seqto \eta$. Let $e$ be an index for the functional which computes the least number $m$ such that $\psi(m)$ holds, which we can do because $\psi$ is quantifier-free. Then, constructing $(v',w')$ using a similar replacement strategy as in the previous case, we get a realiser $(v',w')$ for $\Delta,\Delta' \seqto \psi$.\qedhere
\end{enumerate}
\end{proof}

Just having a realiser is, however, not enough to be able to extract a Weihrauch reduction. For this we need a strong realiser, and we will now prove these exist in a special case.

\begin{defi}
Let
\[\zeta_0 = \forall \alpha_0 \xi_0(\alpha_0) \to \exists \beta_0 \psi_0(\alpha_0,\beta_0), \dots, \zeta_n = \forall \alpha_n \xi_n(\alpha_n) \to \exists \beta_n \psi_n(\alpha_n,\beta_n)\]
be $\Pi^1_2$-formulas such that every first-order quantifier in $\xi_0$ and in $\psi_1,\dots,\psi_n$ is in the scope of a negation. Then we say that $\zeta_1,\dots,\zeta_n \seqto \zeta_0$ is a \emph{number-negative} $\Pi^1_2$-sequent.
\end{defi}

\begin{thm}\label{thm-strong-realiser}
Let $\Sigma$ be the set of formulas which are contraction-similar to an axiom of $\mathrm{EL}_0$.
Let $\Gamma \seqto \forall \alpha_1 \xi_0(\alpha_1) \to \exists \alpha_2 \psi_0(\alpha_1,\alpha_2)$ be a number-negative $\Pi^1_2$ sequent provable in $\mathrm{EL}_0^\mathrm{a}$ from $\Sigma$ with a free-cut free proof. 
Then there is a strong realiser $(v,w)$ of $\Gamma' \seqto \phi$ for some permutation $\Gamma'$ of a subsequence of $\Gamma$
(and in fact $\mathrm{RCA}_0$ proves that $(v,w) \realises \Gamma' \seqto \phi$).

The same holds if we replace $\mathrm{EL}_0^\mathrm{a}$ by $(\mathrm{EL}_0 + \mathrm{MP})^\mathrm{a}$.
\end{thm}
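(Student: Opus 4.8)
The plan is to upgrade the realiser delivered by Theorem~\ref{thm-realiser} to a strong realiser by re-running that theorem's induction on the free-cut free proof, now carrying the extra bookkeeping required by the definition of a strong realiser. The role of the number-negativity hypothesis is to keep the function-variable flow decoupled from the number-variable part: since every first-order quantifier in $\xi_0$ and in $\psi_1,\dots,\psi_n$ lies under a negation, realising any of these formulas (positively or negatively) collapses to plain truth, so none of them ever forces a ``strongly valid at $w(i)$'' obligation. Hence the only strong-validity clauses one has to manufacture are those at the $v(2i+1)$ and at $v(2)$, which are exactly the ones the definition of strong realiser asks for. Together with the subformula property for free-cut free proofs --- every formula in the proof is a subformula of the end-sequent or of an axiom in $\Sigma$ --- this keeps the construction under control.

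Concretely, on the succedent side, the formula $\phi=\forall\alpha_1\xi_0(\alpha_1)\to\exists\alpha_2\psi_0(\alpha_1,\alpha_2)$ must be built, reading upward, by a $(\forall\alpha_1 R)$, a $({\to} R)$ and an $(\exists\alpha_2 R)$; the term $\tau$ that the $(\exists\alpha_2 R)$ supplies for $\alpha_2$, fed through Theorem~\ref{thm-normal-form}, gives the index we install in $v(2)$, and this --- once we know the ambient sequence is valid and the $\psi_j(f_{2j+1},f_{2j+2})$ hold --- is the last bullet of strong realisability. On the hypothesis side, each $\zeta_j=\forall\alpha_{2j+1}\xi_j(\alpha_{2j+1})\to\exists\alpha_{2j+2}\psi_j(\alpha_{2j+1},\alpha_{2j+2})$ of $\Gamma$ that is actually used must be used by first instantiating the bound $\alpha_{2j+1}$ via $(\forall\alpha_{2j+1} L)$ with some term $\tau_{2j+1}$ --- depending only on $\alpha_1$ and on the solutions $\alpha_4,\dots,\alpha_{2j}$ named earlier by $(\exists L)$ steps --- and then applying $({\to} L)$, whose left premise proves $\xi_j(\tau_{2j+1})$ and whose right premise names the solution $\alpha_{2j+2}$. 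The index of $\tau_{2j+1}$ (which, being a term, always denotes) is put into $v(2j+1)=(k,e)$ with $k$ large enough to see $f_1$ and the solutions $f_4,\dots,f_{2j}$; the left premise is precisely what yields $\xi_i(f_{2i+1})$ in the chained-validity clause. The monotonicity chain $v_1(3)<v_1(5)<\dots<v_1(2m-1)$, and $v_1(i)<w_1(j)$ for $1\le i\le m$ and $w_1(j)>0$, is then obtained by scaling the $k$-values before taking unions, using the freedom noted for the two-premise rules in the proof of Theorem~\ref{thm-realiser} and always pushing $w$ strictly above the relevant $v$-values.

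The main obstacle, as I see it, is twofold. First, one must keep the chain $v_1(3)<v_1(5)<\dots$ strictly increasing and correctly interleaved with $w$ across the cuts on the contraction-similar instances of $\mathrm{IA}_0$, QF-AC$^{0,0}$ and $\mathrm{MP}$; here one reuses the self-containedness of the realiser and the ``replace an input by a fresh index'' device from the proof of Theorem~\ref{thm-realiser}, and checks that it leaves the new strong-validity clauses undisturbed. Second, one must arrange the hypotheses in the exact order $\alpha_3,\alpha_4$ for $\zeta_1$, then $\alpha_5,\alpha_6$ for $\zeta_2$, and so on --- which is why the statement only claims a strong realiser of some permutation $\Gamma'$ of a \emph{subsequence} of $\Gamma$: hypotheses that are merely weakly introduced, or used only to discharge an axiom, contribute nothing to the flow and are simply discarded, and the survivors are reindexed in the order their $(\forall L)$ steps occur. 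Finally, as in Theorem~\ref{thm-realiser}, every step of this construction is primitive recursive in the proof and the verification is arithmetical, so $\mathrm{RCA}_0$ proves $(v,w)\realises\Gamma'\seqto\phi$; the $(\mathrm{EL}_0+\mathrm{MP})^\mathrm{a}$ case adds nothing new, the $\mathrm{MP}$-cuts having already been handled.
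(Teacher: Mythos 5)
Your overall strategy --- take the realiser produced by Theorem~\ref{thm-realiser} and argue that, on a number-negative sequent, it already meets the extra demands of a strong realiser --- matches the paper, as does the observation that weakenings and the permutation/subsequence caveat account for the passage from $\Gamma$ to $\Gamma'$. But two of the load-bearing steps are handled incorrectly, and a key lemma is missing entirely.

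First, the role you assign to number-negativity is the wrong one. You argue semantically: since the first-order quantifiers in $\xi_0$ and $\psi_1,\dots,\psi_n$ sit under negations, realising those formulas collapses to truth, so no $w(i)$-level strong-validity obligations arise. That observation is correct as far as it goes, but it is not what the definition of strong realiser demands. The critical constraint is the \emph{syntactic} inequality $v_1(i) < w_1(j)$ for all $1\le i\le m$ and all $j$ with $w_1(j)>0$, which is a property of the pair $(v,w)$ produced by the construction, not a property of which sequences realise which formulas. You propose to obtain it ``by scaling the $k$-values before taking unions'' at two-premise rules, but that freedom only reorders values \emph{across} branches of the proof tree; within a single branch the relative order of $v_1$ and $w_1$ values is fixed by the order in which the quantifier rules fire, and no amount of after-the-fact scaling can repair it. The paper instead uses number-negativity as a constraint on the shape of the proof tree: a first-order quantifier under a negation must eventually be discharged by a $\neg L$ step, whose conclusion has succedent $\bot$, and this (together with the convention that each variable is bound once) forces the number quantifiers of $\psi_0,\psi_1,\xi_0$ to be bound at the right place relative to $\alpha_2$. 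That syntactic argument is the actual content of the hypothesis, and it is absent from your proposal.

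Second, your justification for the strong-validity clauses at $v(2i+1)$ and $v(2)$ --- ``the index of $\tau_{2j+1}$ (which, being a term, always denotes)'' --- is false after the induction steps for cuts. When the construction of Theorem~\ref{thm-realiser} passes a cut on (a contraction-similar instance of) QF-AC$^{0,0}$ or MP, it rewrites every index $v(j)=(k,e')$ by splicing in a search functional $\Phi_e$ that hunts for the least witness of a quantifier-free formula; that search is total only when the relevant $\Pi^0_1/\Pi^0_2$ antecedent is true. So by the time you reach the root, $v_2(2i+1)$ and $v_2(2)$ need not be indices of terms at all, and totality is a genuine obligation. The paper discharges it with Lemma~\ref{lem-induction}, which shows that the side hypotheses $\Delta$ at the relevant cut follow from $\xi_0(f_1)$ and the (true) $\psi_j(f_{2j+1},f_{2j+2})$; only then does one conclude that the search converges. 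You acknowledge in passing that one must ``check that [the replacement device] leaves the new strong-validity clauses undisturbed,'' but that check is precisely the hard step, it is not automatic, and without something like Lemma~\ref{lem-induction} the claim does not go through.

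Finally, a smaller point: the paper does \emph{not} re-run the induction with extra bookkeeping; it runs the same construction and then argues \emph{a posteriori} that the result satisfies the strong-realiser conditions. Your ``re-run with modified invariants'' framing is not wrong in principle, but it hides that the real work is a post-hoc analysis of the proof tree (where variables get bound, what the $\neg L$ steps look like, and what Lemma~\ref{lem-induction} buys you at the QF-AC and MP cuts), rather than a new construction.
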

\begin{proof}
Let $\zeta_0 = \forall \alpha_1 \xi_0(\alpha_1) \to \exists \alpha_2 \psi_0(\alpha_1,\alpha_2)$.
Fix a free-cut free proof from $\Sigma$ of $\Gamma \seqto \zeta_0$ in $\mathrm{EL}_0^\mathrm{a}$. First, note that we may assume that all weakenings had as a principal formula a subformula of a formula of the form $\exists \alpha \eta$ where $\eta$ does not contain any function quantifiers, as long as we move to a subsequence $\Gamma'$ of $\Gamma$.

We now claim that the construction used in the proof of Theorem \ref{thm-realiser}, applied to this proof, directly gives us a strong realiser $(v,w)$. Let us assume that $\Gamma'$ consists of one formula $\zeta_1 = \forall \alpha_3 \xi_1(\alpha_3) \to \exists \alpha_4 \psi_1(\alpha_3,\alpha_4)$; the general case then follows by induction after we order the formulas in $\Gamma'$ according to the values of $v_1$ at their variables.

Let us assume that $v_1(1) < v_1(3) < v_1(4) < v_1(2)$; the other cases are similar but easier.
We need to show that $v_1(2) < w_1(i)$ for any $w_1(i) > 0$. There are four cases: $x_i$ occurs in $\psi_0,\psi_1,\xi_0$ or $\xi_1$. In the first three cases, we show that, if $x_i$ occurs in $\zeta_1$ or $\zeta_2$, then it is bound during the proof\footnote{When we talk about the stage at which a variable is bound during the proof, we mean the derivation step in the proof in whose conclusion the variable first appears bound.} before $\alpha_2$ is bound, which gives the result by closely inspecting the proof of Theorem \ref{thm-realiser}. In fact, it is not hard to see that, if $\alpha_i$ and $\alpha_j$ are bound on the same path of the proof tree, then $v_1(i) < v_1(j)$ if and only if $\alpha_i$ is bound before $\alpha_j$, and similarly for other combinations of function variables and number variables. The case where $x_i$ occurs in $\xi_1$ uses a slightly more complicated argument.

First, let us assume $x_i$ appears in $\psi_1$. Towards a contradiction, let us assume that $x_i$ is not yet bound in some sequent $s$ occurring in the proof, while $\alpha_2$ is bound. Then $x_i$ eventually has to become bound, and since $x_i$ is in the scope of a negation, there must be an application of $\neg L$ after the step in which this binding occurs and hence below $s$; say with conclusion $\Delta \seqto$. So $\exists \alpha_2 \psi_0(\alpha_1,\alpha_2)$ is a subformula of a formula in $\Delta$, and therefore we can never get to the conclusion $\Gamma \seqto \forall \alpha_1 \xi_0(\alpha_1) \to \exists \alpha_2 \phi_0(\alpha_1,\alpha_2)$ by the convention that every variable is bound at most once.

The case where $x_i$ appears in $\xi_0$ is similar. Also, if $x_i$ appears in $\psi_0$ then it is clearly bound before $\alpha_2$. Finally, let us assume $x_i$ appears in $\xi_1$. Then, by our assumption on the use of the weakening rule in our proof, we know that there is some application of ${\to} L$ of the form
\begin{prooftree}
\AxiomC{$\Delta \seqto \xi_1(\alpha_3)$}
\AxiomC{$\Delta',\exists \alpha_4 \psi_1(\alpha_3,\alpha_4) \seqto \eta$}
\BinaryInfC{$\Delta,\Delta',\xi_1 \to \exists \alpha_4 \psi_1(\alpha_3,\alpha_4) \seqto \eta$}
\end{prooftree}
As above, if $x_i$ is bound before $\alpha_2$, then the proof of Theorem \ref{thm-realiser} directly tells us that $v_1(2) < w_1(i)$. So, assume $x_i$ is not bound before $\alpha_2$. Note that $\alpha_2$ is bound in $\eta$, because $\alpha_4$ is bound and otherwise we would have $v_1(2) < v_1(4)$, contradicting our assumption. Then, because the quantifier binding $x_i$ is in the scope of a negation, we can argue as above that $x_i$ has to be bound in $\Delta,\Delta'$. Furthermore, if it is bound in $\Delta'$, it has to be bound before $\alpha_2$, contradicting our assumption. Thus, $x_i$ is bound in $\Delta$, and $\alpha_2$ is bound in $\eta$.
Then, as mentioned in step \eqref{step-imp} of the construction of the realiser, we may assume that $w_1(i) > v_1(2)$, as desired. Finally, using Lemma \ref{lem-induction} below we know that if $\xi_0$ is true, then $\Delta$ is true, and hence $\xi_1$ is true, which is one of the other things we needed to show to prove that $(v,w)$ is a strong realiser.

\bigskip
Next, let us assume that $\xi_0(f_1)$ and $\psi_1(f_3,f_4)$ are true. Let $v(3) = (k,e)$. We claim: $\Phi_e(f_1) {\downarrow}$. Indeed, the only way this could be false is if, somewhere during the proof, we used cut elimination on a formula which is contraction-similar to an instance of QF-AC$^{0,0}$, after $\alpha_3$ was bound. Let us consider an instance of QF-AC$^{0,0}$, say
\[\phi = \forall x_c \exists x_d \eta(x_c,x_d) \to \exists \alpha_i \forall x_b \eta(x_b,\alpha_i(x_b)),\]
the general case follows in the same way. Again, by moving the cut up if necessary, the proof directly before the cut is of the following form.
\begin{prooftree}
\AxiomC{$\seqto \phi$}
\AxiomC{$\Delta \seqto \forall x_c \exists x_d \psi(x_c,x_d)$}
\AxiomC{$\Delta', \exists \alpha_i \forall x_b \psi(x_b,\alpha_i(x_b)) \seqto \eta$}
\BinaryInfC{$\Delta,\Delta',\phi \seqto \eta$}
\BinaryInfC{$\Delta,\Delta' \seqto \eta$}
\end{prooftree}
Since we know that $\alpha_3$ was already bound at this stage of the proof, and we assumed that $v_1(3) < v_1(2)$, we know that $\alpha_2$ was also already bound at this stage of the proof. Therefore, again using Lemma \ref{lem-induction}, we have that $\Delta$ follows from $\xi_0(f_1)$ and the axioms from $\Sigma$, and hence from $\xi_0(f_1)$. Thus, $\forall x_c \exists x_d \psi(x_c,x_d)$ is true, and therefore $\Phi_e(f_1) {\downarrow}$.
By a similar argument, we see that $\Phi_e(f_1,f_3,f_4) {\downarrow}$, which completes the proof of the claim that $(v,w)$ is a strong realiser of the sequent $\zeta_1 \seqto \zeta_0$, as desired.

\bigskip
 
Finally, if we add MP, then nothing changes if we assume that, in our proof of $\Gamma \seqto \zeta_0$, whenever we use an axiom $\phi \in \Sigma$, it is directly followed by a cut rule with $\phi$ as its principal formula (if we actually want to use $\phi$ in the proof, we can prove $\phi \seqto \phi$ first and then apply the cut rule to get a proof of $\seqto \phi$ which does not violate this condition). Then we still have in the proof above that, after $x_i$ gets bound, there needs to be an application of $\neg L$ (even though there are axioms in $\Sigma$ which contain negation symbols).
\end{proof}

\begin{lem}\label{lem-induction}
If in $\mathrm{EL}_0 + \mathrm{MP}$ there is a proof of $\Delta,\Gamma \seqto \xi \to \exists \alpha \psi$ from $\Delta',\Gamma \seqto \exists \alpha \psi$, then, if $\Delta$, $\xi$ and all cut formulas are true,  so is $\Delta'$.
\end{lem}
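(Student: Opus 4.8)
The plan is to prove the lemma by an induction that follows the given derivation from the hypothesis $\Delta',\Gamma\seqto\exists\alpha\psi$ downward to its conclusion $\Delta,\Gamma\seqto\xi\to\exists\alpha\psi$, carrying a soundness-style invariant. Since every axiom of $\mathrm{EL}_0+\mathrm{MP}$ (including the instances of $\mathrm{MP}$) is true in the standard model, the only hypotheses I actually get to use along the way are the truth of the formulas of $\Delta$, of $\xi$, and of the cut formulas.

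First I would note that we may take the proof to be free-cut free, as it is in every application of this lemma. By the subformula property this fixes the shape of the thread of succedents running from $\Delta',\Gamma\seqto\exists\alpha\psi$ down to the root: along it the succedent is $\exists\alpha\psi$ until a single application of $({\to}R)$ — necessarily the step discharging $\xi$ — after which it is $\xi\to\exists\alpha\psi$ all the way to the root. In particular no other right rule acts on this thread, and every two-premise inference met on the thread is either passed through its minor premise, or through its major premise with the current succedent playing the role of cut formula. Moreover every sibling subderivation hanging off the thread is hypothesis-free, hence sound in the standard model.

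The induction then runs as follows. For a sequent $\Pi\seqto\sigma$ on the thread let $\Pi^\dagger$ denote those formulas of $\Pi$ that do not belong to the spectator context $\Gamma$, and maintain the invariant: \emph{if every formula of $\Pi^\dagger$ is true and every cut formula above $\Pi\seqto\sigma$ is true, then every formula of $\Delta'$ is true}. At the hypothesis $\Pi^\dagger=\Delta'$, so the invariant is trivial. The structural rules merely add, duplicate or reorder antecedent formulas and preserve it immediately. For a left rule, truth of all the antecedent formulas of the conclusion classically entails truth of all those of the on-thread premise — for $(\exists L)$ one picks a witness for the existential and uses that the eigenvariable occurs only above its introduction, so that passing to the witness disturbs neither $\Delta'$ nor the cut formulas, once ``true'' is read as ``true under every assignment to the remaining free variables''. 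For $({\to}R)$, which deletes $\xi$ from the antecedent, one invokes exactly the assumption that $\xi$ is true. For a cut (and likewise $({\to}L)$ and $(\wedge R)$) one needs the cut formula to be true: when it is an axiom this is automatic; when the thread passes through the major premise the cut formula equals the current succedent and is absorbed without using its truth; and when the thread passes through the minor premise, soundness of the hypothesis-free major-premise subderivation — together with the fact that its antecedent consists entirely of tracked, non-context formulas whose truth is in hand — supplies the truth of the cut formula. Reaching the root, where $\Pi^\dagger=\Delta$, and using that $\Delta$ and all cut formulas are true, we conclude that $\Delta'$ is true.

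I expect the main obstacle to be precisely the cut rule and the branching inferences: the bookkeeping has to be arranged so that the spectator context $\Gamma$ really rides along untouched and is never asserted true, and so that every formula of $\Delta'$ that is consumed before the root disappears either through a left rule (whose meaning transfers its truth) or as a cut formula (true by hypothesis or by soundness of a sibling subderivation), never through a $({\to}R)$ acting on anything but $\xi$ — which is exactly the point at which free-cut-freeness and the subformula property are needed. The eigenvariable handling for $(\exists L)$ and $(\forall R)$ is a further, entirely routine, complication.
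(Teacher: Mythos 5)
Your argument is correct and essentially reproduces the paper's proof: both induct along the thread from the hypothesis leaf $\Delta',\Gamma\seqto\exists\alpha\psi$ down to the root, exploit that the succedent changes only once via $({\to}R)$ (where $\xi$'s truth is consumed), use classical soundness of the hypothesis-free sibling subderivation at $({\to}L)$ and at cuts, and absorb the cut formulas via the standing truth assumption. The only real difference is cosmetic — you carry the implication ``$\Pi^\dagger$ true $\Rightarrow\Delta'$ true'' as an explicit invariant propagated downward, whereas the paper applies the induction hypothesis to the subderivation below the topmost rule and composes at the root — and you discuss a few cases (e.g.\ $(\wedge R)$ on the thread, or the thread entering a cut through the premise that proves the cut formula) that in fact cannot occur in a free-cut-free derivation with a succedent of this fixed shape.
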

\begin{proof}
By induction on a free-cut free proof. For the induction step, we look at the topmost derivation rule. Note that the only rule which can be applied to the right-hand side is ${\to} R$ with $\xi \to \exists \alpha \psi$ as its principal formula, for which the proof is clear. The proof is also straightforward when the topmost rule only has one upper sequent. Next, consider the case where we have an application of ${\to} L$ of the following form.
\begin{prooftree}
\AxiomC{$\Delta_1 \seqto \eta_1$}
\AxiomC{$\Delta_2,\Gamma,\eta_2 \seqto \exists \alpha \psi$}
\BinaryInfC{$\Delta_1,\Delta_2,\Gamma,\eta_1 \to \eta_2 \seqto \exists \alpha \psi$}
\end{prooftree}
By the induction hypothesis, if $\Delta$ and $\xi$ are true, then so are $\Delta_1,\Delta_2$ and $\eta_1 \to \eta_2$. Then, because $\Delta_1 \seqto \eta_1$ is valid we also have that $\eta_1$ is true. Combining this with the fact that $\eta_1 \to \eta_2$ is true shows that $\eta_2$ is true, which shows that the statement of the lemma also holds for the upper right sequent.

Finally, in case the topmost rule is a cut, the induction step follows from the fact that all cut formulas are true.
\end{proof}

\section{From realisability to Weihrauch reducibility}\label{sec-to-weihrauch}

In the previous section, we showed how to extract a realiser from a proof in $\mathrm{EL}_0$.
We now show how we can move from such realisers to Weihrauch reducibility.

\begin{thm}\label{thm-real-to-weihrauch}
If a $\Pi^1_2$-sequent $\zeta_1,\dots,\zeta_n \seqto \zeta_0$ has a strong realiser, then there are natural numbers $e_1,\dots,e_{n+1}$ such that $\zeta_0$ Weihrauch-reduces to the composition of $\zeta_n,\dots,\zeta_1$, as witnessed by $e_1,\dots,e_{n+1}$. If $\mathrm{RCA}_0$ proves the existence of this realiser, then it also proves that $\zeta_0$ Weihrauch-reduces to the composition of $\zeta_1,\dots,\zeta_n$.
\end{thm}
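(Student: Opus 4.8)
The plan is to read the indices $e_1,\dots,e_{n+1}$ off the strong realiser $(v,w)$ and then to verify, by feeding suitable valid sequences into the clauses defining a strong realiser, that they witness the reduction. Write $\zeta_0=\forall\alpha_1\xi_0(\alpha_1)\to\exists\alpha_2\psi_0(\alpha_1,\alpha_2)$ and $\zeta_i=\forall\alpha_{2i+1}\xi_i(\alpha_{2i+1})\to\exists\alpha_{2i+2}\psi_i(\alpha_{2i+1},\alpha_{2i+2})$ as in the definition of a strong realiser, and let $m$ be as there; one checks $m=n$ from the ordering clauses of that definition, so that the last clause does supply a functional for the solution and the fourth clause one for each instance. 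Because a strong realiser must make the relevant computations converge, $v(2i+1)$ is a pair, say $v(2i+1)=(k_i,e_i')$, and likewise $v(2)=(k_{n+1},e_{n+1}')$. Using monotonicity and the chain of inequalities among the $v_1$-values built into the notion of a strong realiser, one sees that the slots of index below $k_i$ that $\Phi_{e_i'}$ is allowed to read are, after discarding the slots filled with default values, exactly $\alpha_1$ together with the earlier instances $\alpha_3,\dots,\alpha_{2i-1}$ and the earlier solutions $\alpha_4,\dots,\alpha_{2i}$; similarly $\Phi_{e_{n+1}'}$ may read $\alpha_1$ and all of $\alpha_3,\dots,\alpha_{2n+1}$ and $\alpha_4,\dots,\alpha_{2n+2}$. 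By the normal form theorem (Theorem \ref{thm-normal-form}) we may therefore fix an index $e_i$ for the functional that takes the tuple $(\alpha_0,\alpha_1,\dots,\alpha_{i-1},\beta_1,\dots,\beta_{i-1})$ occurring in the definition of reducibility to a composition, reorders and pads it into the input tuple expected by $\Phi_{e_i'}$, and runs $\Phi_{e_i'}$; define $e_{n+1}$ analogously from $e_{n+1}'$.

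To check these work, fix $\alpha_0$ and $\beta_1,\dots,\beta_n$ and, as in the definition, write $\alpha_j$ for $\Phi_{e_j}(\alpha_0,\alpha_1,\dots,\alpha_{j-1},\beta_1,\dots,\beta_{j-1})$. Suppose $\xi_0(\alpha_0)$ holds and, for some $1\le i\le n$, $\psi_j(\alpha_j,\beta_j)$ holds for all $1\le j<i$, so that the $\Phi_{e_j}$ in question converge and the $\alpha_j$ are defined. Build a sequence $(f_1,\dots,f_{|v|},a_1,\dots,a_{|w|})$ by putting $f_1=\alpha_0$, $f_{2j+1}=\alpha_j$ and $f_{2j+2}=\beta_j$ for the relevant $j$, and filling every remaining slot $f_\ell$ (resp.\ $a_\ell$) by running the functional coded by $v(\ell)$ (resp.\ $w(\ell)$) on the lower-indexed slots when $v(\ell)$ (resp.\ $w(\ell)$) is a pair, and by $0$ otherwise; since $(v,w)$ is self-contained this is well defined and valid for $(v,w)$. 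Its hypotheses are precisely $\xi_0(f_1)$ and $\psi_j(f_{2j+1},f_{2j+2})$ for $j<i$, so the corresponding clause of the strong-realiser definition yields $\xi_i(f_{2i+1})$ together with strong validity at $v(2i+1)$; unwinding the latter gives $\Phi_{e_i'}(\dots){\downarrow}=f_{2i+1}$, hence $\Phi_{e_i}(\alpha_0,\alpha_1,\dots,\alpha_{i-1},\beta_1,\dots,\beta_{i-1}){\downarrow}=\alpha_i$ and $\xi_i(\alpha_i)$, as required. Likewise, if $\psi_j(\alpha_j,\beta_j)$ holds for all $1\le j\le n$, the last clause gives strong validity at $v(2)$, so $\Phi_{e_{n+1}}(\dots){\downarrow}$; and since $(v,w)\realises\Gamma\seqto\zeta_0$ and our sequence negatively realises each $\zeta_j$, it positively realises $\exists\alpha_2\psi_0(\alpha_1,\alpha_2)$, whence $\psi_0(\alpha_0,\Phi_{e_{n+1}}(\dots))$.

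Everything above uses only the defining properties of strong realisers and of $\realises$ together with the normal form theorem, so it goes through in $\mathrm{RCA}_0$ from the hypothesis $(v,w)\realises\Gamma\seqto\zeta_0$; hence $\mathrm{RCA}_0$-provability of the realiser transfers to $\mathrm{RCA}_0$-provability of the reduction. I expect the main obstacle to be the bookkeeping in the first paragraph: one has to verify that the variables $\Phi_{e_i'}$ is entitled to read — those with $v_1$- or $w_1$-value below $k_i$ — are, up to reordering and padding, precisely the tuple $(\alpha_0,\alpha_1,\dots,\alpha_{i-1},\beta_1,\dots,\beta_{i-1})$ used in the reducibility definition, that the padding cannot smuggle in a value $\Phi_{e_i'}$ is forbidden to use, and that the filled-out sequence really is valid for $(v,w)$; this is exactly where monotonicity, the chain of inequalities among the $v_1$-values, and self-containedness are used.
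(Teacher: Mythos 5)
Your proposal follows the same overall strategy as the paper's proof: extract the indices from the $v$-part of the strong realiser (up to the reordering and padding you describe), build a sequence of functions and numbers valid for $(v,w)$ around the given instance $\alpha_0$ and the successive $\alpha_j,\beta_j$, invoke the fourth and fifth bullets of the strong-realiser definition for convergence of the $\Phi_{e_i}$ and truth of the $\xi_i$, and finally invoke $(v,w)\realises\Gamma\seqto\zeta_0$ to conclude $\psi_0$. The index bookkeeping in your first paragraph matches what the paper does, modulo its ``assume for ease of notation'' normalisations.

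There is, however, a genuine gap in your construction of the number part $a_1,\dots,a_{|w|}$. You fill the slots with $w(\ell)\in\omega$ with the default value $0$ and then assert, without argument, that the resulting sequence negatively realises each $\zeta_j$ (you also need it to negatively realise $\xi_0$ in order to pass from positively realising $\zeta_0$ to positively realising $\exists\alpha_2\psi_0$). But $\realisesm\psi_j$ and $\realisesm\xi_0$ descend through first-order quantifiers without any validity requirement, so at the atomic level they demand that $\chi[\alpha\mapsto f,x\mapsto a]$ actually be true at your chosen $a_\ell$; with a default of $0$ there is no reason this should hold. The paper handles exactly this: for a variable $x_\ell$ with $w(\ell)\in\omega$ occurring in $\xi_j$ ($j\ge 0$) or in $\psi_j$ ($j\ge 1$), it chooses $a_\ell$ to be a witness making the partially instantiated formula true --- possible precisely because $\xi_0(f_1)$ and $\psi_j(f_{2j+1},f_{2j+2})$ have been arranged to hold and the earlier $a_s$ were chosen coherently --- and it defaults to $0$ only for variables in $\psi_0$, which need not be realised negatively. (Under the number-negativity hypothesis of Theorem~\ref{thm-strong-realiser} all first-order quantifiers in $\xi_0,\psi_1,\dots,\psi_n$ sit under a negation, where $\realisesm$ and $\realisesp$ collapse to plain truth, so a default value would happen to suffice; but Theorem~\ref{thm-real-to-weihrauch} is stated for arbitrary $\Pi^1_2$-sequents, so you cannot appeal to this.) You flag the monotonicity and self-containedness bookkeeping as the likely trouble spot, but the genuinely missing step is this witness selection for the $\omega$-slots, without which ``our sequence negatively realises each $\zeta_j$'' is unjustified.
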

\begin{proof}
Let $\zeta_i = \forall \alpha_{2i+1} \xi_i(\alpha_{2i+1}) \exists \alpha_{2i+2} \psi_i(\alpha_{2i+1},\alpha_{2i+2})$, and let $(v,w)$ be a strong realiser.
For ease of notation, let us assume that $|v|=2n$, that $v_1(1) < v_1(3) < v_1(4) < \dots < v_1(2n+1) < v_1(2n+2) < v_1(2)$, and that $v(3),v(5),\dots,v(2n+1),v(2) \in \omega \times \omega$; the general case follows in the same way.
We reason within $\mathrm{RCA}_0$. Let $f_1$ be arbitrary such that $\xi_0(\alpha_{2i+1})$ holds. Next, let $f_3 = \Phi_{v_2(3)}(f_1)$ (which is total because $(v,w)$ is a strong realiser), and 
let $f_4$ be arbitrary such that $\psi_1(f_3,f_4)$ holds. Continuing like this, for $1 < i < n$ we let 
$f_{2i+1} = \Phi_{v_2(2i+1)}(f_1,f_3,f_4,\dots,f_{2i})$ and we let $f_{2i+2}$ be arbitrary such that $\psi_i(f_{2i+1},f_{2i+2})$ holds. Then $\xi_i(f_{2i+1})$ holds for every $1 \leq i \leq n$ because $(v,w)$ is a strong realiser.

Now, let $f_2 = \Phi_{v_2(2)}(f_1,f_3,f_4,\dots,f_{2n+2})$, which is again total because $(v,w)$ is a strong realiser. We claim: $(f_1,\dots,f_{2n+2})$ can be extended to a sequence which is valid for $(v,w)$ and which negatively realises $\zeta_1,\dots,\zeta_n$ and $\xi_0$.

For ease of notation, let us assume $w_1(1) < w_1(2) < \dots < w_1(|w|)$; otherwise we rename variables. Let us assume $a_1,\dots,a_{i-1}$ have been defined, and let us define $a_i$. If $w(i) = (k,e)$, let $a_i = \Phi_e(f_1,\dots,f_{2n+2},a_1,\dots,a_{i-1})$ if this is defined, and $0$ otherwise. If $w(i) \in \omega$, let $\eta$ be the unique subformula of $\zeta_0,\zeta_1,\dots,\zeta_n$ such that $\eta$ is of the form $Q x_i \eta'(x_i)$. If $\eta$ is a subformula of $\xi_j$ for some $j \geq 0$ or of $\psi_j$ for some $j \geq 1$, call this formula of which $\eta$ is a subformula $\xi$. Now remove from $\xi$ all quantifiers of the form $Q x_s$ for some $s < i$ to obtain $\xi'$. Next, choose a value $a_i$ such that $\xi'$ is true at $f_1,\dots,f_{2n+2},a_1,\dots,a_i$. We can always do this because $\xi_0,\xi_1,\dots,\xi_n$ and $\psi_1,\dots,\psi_n$ are true, and because how we have inductively chosen $a_1,\dots,a_{i-1}$.
On the other hand, if $\eta$ is a subformula of $\psi_0$, let $a_i$ be $0$.

It is now not hard to check that $(f_1,\dots,f_{2n+2},a_1,\dots,a_{|w|})$ is a valid sequence negatively realising $\zeta_1,\dots,\zeta_n$, and hence it positively realises $\zeta_0$. Since it also negatively realises $\xi_0$, we see that it positively realises $\psi_0(f_1,f_2)$. In particular, $\psi_0(f_1,f_2)$ is true, which completes the proof that $\zeta_0$ Weihrauch-reduces to the composition of $\zeta_1,\dots,\zeta_n$.
\end{proof}

We can now prove the first implication of our main theorem, by combining the theorems proven above.

\begin{thm}\label{thm-main-1}
If $\zeta_1 \to \zeta_0$ is a number-negative $\Pi^1_2$-sequent provable in $\mathrm{EL}_0 + \mathrm{MP}$, then there are natural numbers $n$ and $e_1,\dots,e_{n+1}$ such that $\mathrm{RCA}_0$ proves that $\zeta_0$ Weichrauch-reduces to the composition of $n$ copies of $\zeta_1$, as witnessed by $e_1,\dots,e_{n+1}$.
\end{thm}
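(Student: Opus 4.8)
The plan is to obtain Theorem~\ref{thm-main-1} as an essentially mechanical chaining of the three main constructions already established, paying attention only to the shifts between syntactic forms. First, since $\zeta_1 \to \zeta_0$ is a $\Pi^1_2$-sequent provable in $\mathrm{EL}_0 + \mathrm{MP}$, I would invoke the second part of Lemma~\ref{lem-con-sim} to get a free-cut free proof in $(\mathrm{EL}_0 + \mathrm{MP})^\mathrm{a}$ from $\Sigma$ of some $\Pi^1_2$-sequent $\Gamma' \seqto \zeta_0'$ contraction-similar to $\zeta_1 \seqto \zeta_0$. Here $\zeta_0'$ is contraction-similar to $\zeta_0$ and, because $\zeta_1$ is the only formula in the antecedent, every formula of $\Gamma'$ is contraction-similar to $\zeta_1$. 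Two bookkeeping points then need checking: (i) $\Gamma' \seqto \zeta_0'$ is still number-negative, which holds because duplicating a subformula $\chi \mapsto \chi \wedge \chi$ never pulls a quantifier out of the scope of a negation; and (ii) since $\zeta_0'$ is a $\Pi^1_2$-formula contraction-similar to $\zeta_0 = \forall\alpha_1\,\xi_0(\alpha_1)\to\exists\alpha_2\,\psi_0(\alpha_1,\alpha_2)$, and one cannot recover the shape $\forall\alpha(\cdots)\to\exists\beta(\cdots)$ once the root implication, the antecedent $\forall\alpha_1\xi_0$, or the consequent $\exists\alpha_2\psi_0$ has been duplicated, all duplications occur strictly inside $\xi_0$ and $\psi_0$; hence $\zeta_0' = \forall\alpha_1\,\xi_0'(\alpha_1)\to\exists\alpha_2\,\psi_0'(\alpha_1,\alpha_2)$ with $\xi_0'$ contraction-similar to $\xi_0$ and $\psi_0'$ to $\psi_0$, and likewise each formula of $\Gamma'$ decomposes with arithmetical parts contraction-similar to those of $\zeta_1$.

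Next I would feed this free-cut free proof into Theorem~\ref{thm-strong-realiser} to produce a strong realiser $(v,w)$ of $\Gamma'' \seqto \zeta_0'$, where $\Gamma''$ is a permutation of a subsequence of $\Gamma'$, with $\mathrm{RCA}_0$ proving $(v,w) \realises \Gamma'' \seqto \zeta_0'$; writing $\Gamma'' = \eta_1,\dots,\eta_n$, each $\eta_j$ is still contraction-similar to $\zeta_1$. Applying Theorem~\ref{thm-real-to-weihrauch} to this strong realiser then yields indices $e_1,\dots,e_{n+1}$ such that $\mathrm{RCA}_0$ proves that $\zeta_0'$ Weihrauch-reduces to the composition of $\eta_n,\dots,\eta_1$, as witnessed by $e_1,\dots,e_{n+1}$.

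The final step is to replace the primed and permuted formulas by $\zeta_0$ and by $n$ literal copies of $\zeta_1$. By Lemma~\ref{lem-con-sim-equiv}, $\mathrm{IQC}$, and hence $\mathrm{RCA}_0$, proves $\xi_0 \leftrightarrow \xi_0'$, $\psi_0 \leftrightarrow \psi_0'$, and, for each $j$, the equivalences between the arithmetical parts of $\eta_j$ and those of $\zeta_1$. Since the statement that $e_1,\dots,e_{n+1}$ witness the Weihrauch reduction is, over $\mathrm{RCA}_0$, merely a conjunction of implications whose only nonlogical ingredients are these arithmetical matrices together with the functionals $\Phi_{e_i}$, substituting provably equivalent matrices turns ``$\zeta_0'$ reduces to the composition of $\eta_n,\dots,\eta_1$'' into ``$\zeta_0$ reduces to the composition of $n$ copies of $\zeta_1$'', which is exactly what is wanted; the degenerate case $n = 0$, where $\Gamma''$ is empty and the conclusion asserts that $\zeta_0$ is computably true (witnessed by $e_1$ alone), is subsumed by the same argument. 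I expect the only genuinely delicate point to be this last transfer: one must be sure that contraction-similarity restricted to $\Pi^1_2$-formulas really leaves the outer $\forall\alpha(\cdots)\to\exists\beta(\cdots)$ skeleton intact, so that it descends to equivalences of the arithmetical kernels, and that the bound-variable renaming conventions correctly identify the $j$-th copy of $\zeta_1$ in the composition with $\eta_j$; everything else is a direct application of the cited results.
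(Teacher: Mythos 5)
Your proposal is correct and follows essentially the same route as the paper's proof: apply Lemma~\ref{lem-con-sim} to get an affine free-cut free proof of a contraction-similar sequent, feed that into Theorem~\ref{thm-strong-realiser} to obtain a strong realiser, extract the reduction via Theorem~\ref{thm-real-to-weihrauch}, and use Lemma~\ref{lem-con-sim-equiv} to transfer back. The paper is terser about the bookkeeping you make explicit (that contraction-similarity preserves number-negativity and the outer $\Pi^1_2$ skeleton, and that the final substitution of provably equivalent arithmetical matrices carries the reduction over), but those checks are correct and just fill in what the paper leaves implicit.
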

\begin{proof}
Let $\Sigma$ be the set of formulas which are contraction-similar to an axiom of $\mathrm{EL}_0 + \mathrm{MP}$.
From Lemma \ref{lem-con-sim} we know that there is some sequent $\eta_1,\dots,\eta_n \seqto \psi$ which is contraction-similar to $\zeta_1 \seqto \zeta_0$ and which is provable in $(\mathrm{EL}_0 + \mathrm{MP})^\mathrm{a}$ from the axioms in $\Sigma$. Then, by Theorem \ref{thm-strong-realiser} there is a strong realiser of $\Gamma \seqto \psi$, where $\Gamma$ is a permutation of a subsequence of $\eta_n,\dots,\eta_1$. So, by Theorem \ref{thm-real-to-weihrauch} $\zeta_0$ Weihrauch-reduces to the composition of $\Gamma$. All these steps are provable in $\mathrm{RCA}_0$.

However, since every $\eta_i$ is contraction-similar to $\zeta_1$, we know from Lemma \ref{lem-con-sim-equiv} that $\zeta_1 \leftrightarrow \eta_i$; similarly we have that $\zeta_0 \leftrightarrow \psi$. Thus, we have that 
$\zeta_0$ is Weihrauch-reducible to the composition of finitely many copies of $\zeta_1$, as desired.
\end{proof}

\section{From Weihrauch reducibility to $\mathrm{EL}_0$}\label{sec-converse}

We now prove the reverse implication of our main theorem. For this, we use the Kuroda negative translation.

\begin{defi}{\rm (Kuroda \cite{kuroda-1951})}
The \emph{Kuroda negative translation} of $\phi$, $\phi^q$, is defined as $\neg\neg \phi^*$, where $\phi^*$ is recursively defined by:
\begin{itemize}
\item $A^* = A$ for atomic formulas A.
\item $(\phi \square \psi)^* = \phi^* \square \psi^*$ for $\square \in \{\wedge,\to\}$.
\item $(\exists x \phi)^* = \exists x \phi^*$ and $(\exists \alpha \phi)^* = \exists \alpha \phi^*$.
\item $(\forall x \phi)^* = \forall x \neg\neg \phi^*$ and $(\forall \alpha \phi)^* = \forall \alpha \neg\neg  \phi^*$.
\end{itemize}
\end{defi}

\begin{lem}
If $\mathrm{RCA}_0 \vdash \phi$, then $\mathrm{EL}_0 + \mathrm{MP} \vdash \phi^q$.
\end{lem}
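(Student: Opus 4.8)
The plan is to pass through the standard negative translation machinery. First I would recall that $\mathrm{RCA}_0$ is $\mathrm{EL}_0 + \mathrm{LEM}$ under the function/set identification (as noted after Definition \ref{defi-el0}, citing Dorais and Kohlenbach), so it suffices to show that whenever $\mathrm{RCA}_0 \vdash \phi$, the Kuroda translation $\phi^q$ is provable in $\mathrm{EL}_0 + \mathrm{MP}$. This splits into two tasks: (a) a \emph{soundness} step showing that the Kuroda translation sends every classical derivation of $\phi$ from a set of axioms $T$ to an intuitionistic derivation of $\phi^q$ from $\{\chi^q : \chi \in T\}$; and (b) an \emph{axiom-by-axiom} check that $\mathrm{EL}_0 + \mathrm{MP}$ proves $\chi^q$ for each axiom $\chi$ of $\mathrm{RCA}_0$ (equivalently, of $\mathrm{EL}_0$ together with the logical axioms/rules that make it classical).

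For (a), I would argue by induction on a classical sequent derivation, using the well-known facts about Kuroda's translation: it commutes with $\wedge$ and $\to$ up to intuitionistic equivalence, $\neg\neg$ can be freely inserted and deleted in front of a formula already of the form $\neg\neg\vartheta$, and double negations can be pushed through positive connectives. Concretely, one shows $\mathrm{IQC} \vdash (\Gamma \seqto \phi)$ implies $\mathrm{IQC} \vdash (\Gamma^q \seqto \phi^q)$; the only rule that needs genuine care is the one giving classical logic its strength — here that is the law of the excluded middle $\psi \vee \neg\psi$, whose translation is intuitionistically provable (after unfolding the $\vee$-abbreviation, $(\psi \vee \neg\psi)^q$ is provable in $\mathrm{IQC}$ since $\neg\neg(\psi^* \vee \neg\psi^*)$ is an intuitionistic theorem, modulo the weak disjunction caveat of Remark \ref{rem-disjunction}). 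The quantifier rules are handled by the usual bookkeeping: the $\forall$-clauses of $*$ put the double negations exactly where they are needed so that $\forall$-elimination goes through, and Markov's principle is \emph{not} needed for (a).

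For (b), I would run through the axioms of $\mathrm{EL}_0$ from Definition \ref{defi-el0}. The purely universal axioms — the equality axioms, the defining axioms for primitive recursive function symbols, (SA), (CON), (REC) — have translations that are intuitionistically equivalent to the originals (each is a universally quantified quantifier-free formula, and $(\forall \bar x\, B)^q$ reduces to $\forall \bar x\, \neg\neg B$, which equals $\forall \bar x\, B$ in $\mathrm{EL}_0$ since equality, and hence every quantifier-free formula, is decidable). For (QF-IA) one checks that its Kuroda translation follows from (QF-IA) itself together with the decidability of quantifier-free formulas; and here is the one place where \textbf{Markov's principle enters}: the translation of $\mathrm{QF\text{-}AC}^{0,0}$, namely (roughly) $\neg\neg(\forall x\,\neg\neg\exists y\, \neg\neg B \to \exists\alpha\,\forall z\,\neg\neg B)$, requires collapsing $\neg\neg\exists y\,B$ to $\exists y\, B$ for quantifier-free $B$ before one can apply $\mathrm{QF\text{-}AC}^{0,0}$ — exactly $\mathrm{MP}$. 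So the proof that $\mathrm{EL}_0 + \mathrm{MP} \vdash (\mathrm{QF\text{-}AC}^{0,0})^q$ is the substantive case.

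The main obstacle I anticipate is precisely this interaction between the negative translation and choice: one must be careful that, after translation, the matrix to which $\mathrm{QF\text{-}AC}^{0,0}$ is applied is genuinely of the form $\forall x \exists y\, (\text{quantifier-free})$, which forces several applications of $\mathrm{MP}$ and the decidability of quantifier-free formulas to strip away the interleaved double negations; the $\forall$-clause of the $*$-translation inserts a $\neg\neg$ in front of the matrix, so even there a small $\mathrm{MP}$-step is needed. A secondary, more bureaucratic obstacle is the weak disjunction of Remark \ref{rem-disjunction}: since $\vee$ is only an abbreviation and full $\vee L$ is unavailable, I would make sure the excluded-middle step in (a) and the decidability-of-atomics steps in (b) only ever use the weak $\vee$-elimination (which is available for $\Sigma^0_1$ and negated formulas), rather than the full rule. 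Once these points are handled, the lemma follows by combining (a) and (b): a $\mathrm{RCA}_0$-proof of $\phi$ is an $(\mathrm{EL}_0+\mathrm{LEM})$-proof, its translation is an $\mathrm{IQC}$-proof of $\phi^q$ from the $\chi^q$, and each $\chi^q$ is then discharged in $\mathrm{EL}_0 + \mathrm{MP}$.
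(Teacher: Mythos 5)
Your proposal is correct, and it is in substance the same argument the paper relies on: the paper's own proof of this lemma is just a citation to Fujiwara (Lemma 11) and Kohlenbach (Propositions 10.3 and 10.6), which establish the statement for $\mathrm{RCA}$ and $\mathrm{EL}+\mathrm{MP}$ and are then transferred to the $0$-subscripted systems. What you spell out — soundness of Kuroda's translation over $\mathrm{IQC}$ (with $\mathrm{LEM}^q$ discharged intuitionistically after unfolding the $\vee$-abbreviation), followed by the axiom-by-axiom check in which stability of quantifier-free formulas handles the universal axioms and $\mathrm{QF\text{-}IA}$, and $\mathrm{MP}$ is invoked exactly once to collapse $\neg\neg\exists y\,B$ so that $\mathrm{QF\text{-}AC}^{0,0}$ applies — is precisely the content of those cited proofs.
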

\begin{proof}
See Fujiwara \cite[Lemma 11]{fujiwara-2015} or Kohlenbach \cite[Propositions 10.3 and 10.6]{kohlenbach-2005}; although they prove this result for $\mathrm{RCA}$ and $\mathrm{EL} + \mathrm{MP}$, it is easy to see that the same proof yields the desired result.
\end{proof}

\begin{thm}\label{thm-main-2}
Let $\zeta_i = \forall \alpha \xi_i(\alpha_i) \to \exists \beta_i \psi_i(\alpha_i,\beta_i)$ for $0 \leq i \leq n$ be $\Pi^1_2$-formulas.
If there are natural numbers $e_1,\dots,e_{n+1}$ such that $\mathrm{RCA}_0$ proves that $\zeta_0$ Weihrauch-reduces to the composition of $\zeta_n,\dots,\zeta_1$ with reductions witnessed by $e_1,\dots,e_{n+1}$, and we let $\zeta'_i$ be the formula $\zeta_i$ with $\xi_i$ replaced by $\xi_i^q$ and $\psi_i$ replaced by $\psi_i^q$,
then $\mathrm{EL}_0 + \mathrm{MP}$ proves that $(\zeta'_1 \wedge \dots \wedge \zeta'_n) \to \zeta'_0$.
\end{thm}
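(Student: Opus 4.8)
The plan is to feed the (classically provable) Weihrauch reduction through the Kuroda translation and then re-run it inside $\mathrm{EL}_0 + \mathrm{MP}$ with all occurrences of $\xi_i$ and $\psi_i$ replaced by their $q$-translations. Write $W$ for the formula of the language of $\mathrm{EL}_0$ expressing that $e_1,\dots,e_{n+1}$ witness that $\zeta_0$ Weihrauch-reduces to the composition of $\zeta_n,\dots,\zeta_1$. By hypothesis $\mathrm{RCA}_0 \vdash W$, so by the previous lemma $\mathrm{EL}_0 + \mathrm{MP} \vdash W^q$. Now $W$ has the form $\forall \alpha_0 \forall \beta_1 \cdots \forall \beta_n\, M$, where $M$ contains no function quantifiers and is a finite conjunction of implications $H_i \to C_i$ whose constituents are the arithmetical formulas $\xi_j,\psi_j$ (with the virtual functions $\Phi_{e_j}(\dots)$ substituted for the $\alpha_j$) together with the $\Pi^0_2$ totality statements $\Phi_{e_j}(\dots){\downarrow}$, i.e.\ $\forall a\,\exists k\,\theta_j$ with $\theta_j$ quantifier-free. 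Using only the intuitionistic facts $\neg\neg\forall x\,\chi\to\forall x\,\neg\neg\chi$, $\neg\neg(\chi_1\wedge\chi_2)\leftrightarrow\neg\neg\chi_1\wedge\neg\neg\chi_2$, $\neg\neg(\chi_1\to\chi_2)\to(\neg\neg\chi_1\to\neg\neg\chi_2)$ and $\neg\neg\neg\neg\chi\leftrightarrow\neg\neg\chi$, one pushes the double negations of $W^q=\neg\neg W^*$ through the universal quantifiers and through $M$ to obtain, in $\mathrm{EL}_0$, the \emph{$q$-decorated reduction} $\forall \alpha_0 \forall \beta_1 \cdots \forall \beta_n \bigwedge_i (H_i^q \to C_i^q)$. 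Since the Kuroda clauses give $(\chi_1\wedge\chi_2)^q\leftrightarrow\chi_1^q\wedge\chi_2^q$, each antecedent $H_i^q$ is precisely the conjunction of $\xi_0^q(\alpha_0)$ with the formulas $\psi_j^q(\Phi_{e_j}(\dots),\beta_j)$ occurring in $H_i$, and each $C_i^q$ is the conjunction of $(\Phi_{e_i}(\dots){\downarrow})^q$ with $\xi_i^q(\Phi_{e_i}(\dots))$, respectively (for $i=n+1$) with $\psi_0^q(\alpha_0,\Phi_{e_{n+1}}(\dots))$.

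Second, $\mathrm{MP}$ converts a double-negated totality statement into an honest total functional: $(\Phi_{e_j}(\dots){\downarrow})^q$ reduces in $\mathrm{EL}_0$ to $\forall a\,\neg\neg\exists k\,\theta_j$, and since $\theta_j$ is quantifier-free, $\mathrm{MP}$ yields $\forall a\,\exists k\,\theta_j = \Phi_{e_j}(\dots){\downarrow}$; then, as $\Phi_{e_j}(\dots)(a)=b$ is $\Sigma^0_1$, the $\Sigma^0_1$ axiom of choice provable in $\mathrm{EL}_0$ produces a function $\gamma$ with $\Phi_{e_j}(\dots)=\gamma$. Now argue in $\mathrm{EL}_0 + \mathrm{MP}$: assume $\zeta_1'\wedge\cdots\wedge\zeta_n'$, fix $\alpha_0$, and assume $\xi_0^q(\alpha_0)$; we produce $\beta_0$ with $\psi_0^q(\alpha_0,\beta_0)$ by iterating $n$ times (a fixed number of steps, so no induction axiom is needed). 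Having constructed functions $\beta_1,\dots,\beta_{i-1}$ and the induced $\alpha_1,\dots,\alpha_{i-1}$ with $\xi_0^q(\alpha_0)$ and $\psi_j^q(\alpha_j,\beta_j)$ for $j<i$, instantiate the $q$-decorated reduction at $\alpha_0,\beta_1,\dots,\beta_{i-1}$ (with arbitrary values for the remaining function quantifiers); then $H_i^q$ holds, hence so does $C_i^q$, i.e.\ $(\Phi_{e_i}(\dots){\downarrow})^q$ and $\xi_i^q(\Phi_{e_i}(\dots))$; by the previous paragraph we extract a function $\alpha_i$ with $\Phi_{e_i}(\dots)=\alpha_i$, so $\xi_i^q(\alpha_i)$, and applying $\zeta_i'$ to $\alpha_i$ gives $\beta_i$ with $\psi_i^q(\alpha_i,\beta_i)$. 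After $n$ steps, instantiating the final clause at $\alpha_0,\beta_1,\dots,\beta_n$ gives $(\Phi_{e_{n+1}}(\dots){\downarrow})^q$ and $\psi_0^q(\alpha_0,\Phi_{e_{n+1}}(\dots))$; extracting $\beta_0$ with $\Phi_{e_{n+1}}(\dots)=\beta_0$ yields $\psi_0^q(\alpha_0,\beta_0)$, hence $\exists\beta_0\,\psi_0^q(\alpha_0,\beta_0)$. Discharging the assumptions proves $\zeta_0'$, so $\mathrm{EL}_0+\mathrm{MP}\vdash(\zeta_1'\wedge\cdots\wedge\zeta_n')\to\zeta_0'$.

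The step I expect to require the most care is the bookkeeping around the virtual functions: $\Phi_{e_j}(\dots)$ is not a function term, so the claims ``substitute $\Phi_{e_j}(\dots)$ for $\alpha_j$ in $\xi_j$ and $\psi_j$'', ``the Kuroda translation commutes with this substitution'', and ``replace $\Phi_{e_j}(\dots)$ by the extracted honest function $\alpha_j$'' must all be made precise using the normal form theorem and the equality/extensionality axioms of $\mathrm{EL}_0$; the cleanest route is to phrase every $\Phi_{e_j}(\dots)$-occurrence via its $\Sigma^0_1$ graph from the outset. Beyond that, the only genuinely non-constructive ingredient is the single use of $\mathrm{MP}$ to collapse the double negation in front of each totality statement $\Phi_{e_j}(\dots){\downarrow}$, so that $\Sigma^0_1$-choice can be applied to turn it into an honest function; the $q$-translations of the $\xi_j$ and $\psi_j$ themselves are simply carried along and matched against the hypotheses $\zeta_i'$.
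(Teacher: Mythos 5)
Your proposal is correct and follows essentially the same route as the paper: write the Weihrauch reduction as a universal formula over a propositional combination of the $\xi_i$, $\psi_i$, and the $\Pi^0_2$ totality statements, apply the negative translation together with the intuitionistic commutation facts for $\neg\neg$ and $\mathrm{MP}$ to obtain the $q$-decorated reduction in $\mathrm{EL}_0 + \mathrm{MP}$, and then chain the $\zeta'_i$'s through it. The only difference is that you spell out the final step (extracting honest functions via $\Sigma^0_1$-choice and iterating $n$ times) which the paper compresses into ``the result now easily follows.''
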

\begin{proof}
The fact that $\zeta_0$ Weihrauch-reduces to the composition of $\zeta_1,\dots,\zeta_n$, as witnessed by $e_1,\dots,e_n+1$, is expressed by a formula of the form
\[\forall \alpha_0,\dots,\alpha_n,\beta_0,\dots,\beta_n (\gamma(\alpha_0,\dots,\alpha_n,\beta_0,\dots,\beta_n)),\]
where $\gamma$ is built using only the propositional connectives from the formulas $\xi_i$ and $\psi_i$, from formulas expressing that some functional is defined on given inputs, and formulas expressing that some $\alpha_i$ or $\beta_i$ is equal to the output of some functional. Note that the latter two are $\Pi^0_2$ respectively $\Pi^0_1$. Also, note that for a $\Pi^0_2$-formula $\forall x \exists y \zeta$, we have (in $\mathrm{EL}_0 + \mathrm{MP}$) that
\[(\forall x \exists y \zeta)^q = \neg\neg \forall x \neg\neg \exists y \zeta \Leftrightarrow \forall x \neg\neg \exists y \zeta \Leftrightarrow \forall x \exists y \zeta,\]
where the last equivalence follows by Markov's principle. Thus, we see (in $\mathrm{EL}_0 + \mathrm{MP}$) that
\begin{align*}
(\forall \alpha_0,\dots,\alpha_n,\beta_0,\dots,\beta_n (\gamma))^q
&\Leftrightarrow \neg\neg \forall \alpha_0,\dots,\alpha_n,\beta_0,\dots,\beta_n \neg\neg \gamma^*\\
&\Leftrightarrow \forall \alpha_0,\dots,\alpha_n,\beta_0,\dots,\beta_n \neg\neg \gamma^*\\
&\Leftrightarrow \forall \alpha_0,\dots,\alpha_n,\beta_0,\dots,\beta_n \gamma^q\\
&\Leftrightarrow \forall \alpha_0,\dots,\alpha_n,\beta_0,\dots,\beta_n \gamma',
\end{align*}
where $\gamma'$ is the formula $\gamma$ where every formula $\xi_i$ is replaced by $\xi_i^q$ and every formula $\psi_i$ is replaced by $\psi_i^q$. Thus, using the previous lemma we see that $\mathrm{EL}_0 + \mathrm{MP}$ proves that $\zeta_0'$ Weihrauch-reduces to $\zeta'_1,\dots,\zeta'_n$. The result now easily follows.
\end{proof}

In particular, we can now prove our main result.

\begin{thm}\label{thm-main}
Let $\zeta_i = \forall \alpha_i \xi_i(\alpha_i) \to \exists \beta_i \psi_i(\alpha_i,\beta_i)$ for $i \in \{0,1\}$. Then the following are equivalent:
\begin{enumerate}
\item There are an $n \in \omega$ and $e_1,\dots,e_{n+1}$ such that $\mathrm{RCA}_0$ proves that $e_1,\dots,e_{n+1}$ witness that $\zeta_0$ Weihrauch-reduces to the composition of $n$ copies of $\zeta_1$.
\item For $\zeta'_i = \forall \alpha_i \xi^q_i(\alpha_i) \to \exists \beta_i \psi^q_i(\alpha_i,\beta_i)$ we have that $\mathrm{EL}_0 + \mathrm{MP}$ proves that $\zeta'_1 \to \zeta'_0$.
\end{enumerate}
\end{thm}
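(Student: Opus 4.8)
The plan is to read off this equivalence from Theorems \ref{thm-main-1} and \ref{thm-main-2} together with the elementary properties of the Kuroda translation; no new argument is needed beyond lining up the definitions.

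For the direction $(1) \Rightarrow (2)$, I would take the witnesses $n$ and $e_1,\dots,e_{n+1}$ provided by $(1)$ and view the reduction as a reduction of $\zeta_0$ to the composition of $\zeta_n,\dots,\zeta_1$ in which $\zeta_1 = \zeta_2 = \dots = \zeta_n$ (renaming bound variables as necessary). Theorem \ref{thm-main-2} then yields that $\mathrm{EL}_0 + \mathrm{MP}$ proves $(\zeta_1' \wedge \dots \wedge \zeta_1') \to \zeta_0'$. Since $\mathrm{EL}_0$, which has contraction, proves $\zeta_1' \leftrightarrow (\zeta_1' \wedge \dots \wedge \zeta_1')$ by repeated use of $(\wedge R)$, $(\wedge L)$, $(W)$ and $(C)$, we obtain $\mathrm{EL}_0 + \mathrm{MP} \vdash \zeta_1' \to \zeta_0'$, which is $(2)$.

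For the direction $(2) \Rightarrow (1)$, the first step is to check that $\zeta_1' \to \zeta_0'$ is a number-negative $\Pi^1_2$-sequent. Indeed, $\xi_i^q = \neg\neg\xi_i^*$ and $\psi_i^q = \neg\neg\psi_i^*$ are arithmetical (the translation introduces no function quantifiers), so $\zeta_0'$ and $\zeta_1'$ are $\Pi^1_2$-formulas; and every first-order quantifier occurring in $\xi_0^q$ or in $\psi_1^q$ lies inside the leading $\neg\neg$, so the number-negativity condition is met. Hence Theorem \ref{thm-main-1} applies and gives $n$ and $e_1,\dots,e_{n+1}$ such that $\mathrm{RCA}_0$ proves that these indices witness that $\zeta_0'$ Weihrauch-reduces to the composition of $n$ copies of $\zeta_1'$. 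The final step is to transfer this back from the primed to the unprimed formulas: in classical logic, and hence in $\mathrm{RCA}_0$, one has $\chi \leftrightarrow \chi^q$ for every arithmetical $\chi$ (the $*$-operation only inserts classically vacuous double negations after universal quantifiers, and $\chi^* \leftrightarrow \neg\neg\chi^*$), so $\mathrm{RCA}_0 \vdash \xi_i \leftrightarrow \xi_i^q$ and $\mathrm{RCA}_0 \vdash \psi_i \leftrightarrow \psi_i^q$. The statement that $e_1,\dots,e_{n+1}$ witness the Weihrauch reduction is built from $\xi_i$ and $\psi_i$ (respectively their translations) using only propositional connectives together with arithmetical clauses about the functionals $\Phi_{e_j}$, so replacing each $\xi_i^q$ by $\xi_i$ and each $\psi_i^q$ by $\psi_i$ preserves provability in $\mathrm{RCA}_0$. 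This yields $(1)$.

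Since the substantive content is entirely in the earlier theorems, there is no real obstacle here; the only point requiring attention is the verification that the outermost double negation of $\xi^q$ and $\psi^q$ is exactly what makes $\zeta_1' \to \zeta_0'$ number-negative, which is needed for Theorem \ref{thm-main-1} to be applicable in the direction $(2) \Rightarrow (1)$.
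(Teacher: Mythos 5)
Your proof is correct and follows the same route as the paper: the paper's proof is the one line ``From Theorem \ref{thm-main-1} and Theorem \ref{thm-main-2},'' and you have simply spelled out the routine bookkeeping that those citations suppress. The two details you flag --- that $\zeta_1' \seqto \zeta_0'$ is number-negative because the Kuroda translation wraps $\xi_0^*$ and $\psi_1^*$ in a leading $\neg\neg$, and that the primed/unprimed Weihrauch statements are interchangeable over $\mathrm{RCA}_0$ since classically $\chi \leftrightarrow \chi^q$ --- are exactly the right things to check, and you check them correctly.
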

\begin{proof}
From Theorem \ref{thm-main-1} and Theorem \ref{thm-main-2}.
\end{proof}

\section{True Weihrauch reducibility and affine logic}

It turns out that, if we weaken our proof system for intuitionistic logic, we get a correspondence with \emph{true} Weihrauch reducibility, i.e.\ without resorting to the composition of multiple copies of $\zeta_1$. This gives us a proof-theoretic formulation of Weihrauch-reducibility in $\mathrm{RCA}_0$.

\begin{thm}\label{thm-main-affine}
Let $\zeta_i = \forall \alpha_i \xi_i(\alpha_i) \to \exists \beta_i \psi_i(\alpha_i,\beta_i)$ for $i \in \{0,1\}$. Then the following are equivalent:
\begin{enumerate}
\item\label{equiv1} There are $e_1,e_2$ such that $\mathrm{RCA}_0$ proves that $e_1,e_2$ witness that $\zeta_0$ Weihrauch-reduces to $\zeta_1$.
\item\label{equiv2} For $\zeta'_i = \forall \alpha_i \xi^q_i(\alpha_i) \to \exists \beta_i \psi^q_i(\alpha_i,\beta_i)$ we have that $(\mathrm{EL}_0 + \mathrm{MP})^{\exists\alpha\mathrm{a}}$ proves that $\zeta'_1 \to \zeta'_0$.
\end{enumerate}
\end{thm}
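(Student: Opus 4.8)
The plan is to run the same two-step argument as in the proof of Theorem~\ref{thm-main}, but tracking the affine restrictions throughout. The point that makes the number of copies of $\zeta_1$ drop to one is that a $\Pi^1_2$-formula contains function quantifiers, and in $(\mathrm{EL}_0+\mathrm{MP})^{\exists\alpha\mathrm{a}}$ such a formula can be neither contracted nor weakened, so it is used \emph{linearly} in any proof.

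For \eqref{equiv1}$\Rightarrow$\eqref{equiv2} I would revisit the proof of Theorem~\ref{thm-main-2} in the case $n=1$. The statement that $e_1,e_2$ witness $\zeta_0\le_{\mathrm W}\zeta_1$ is literally a conjunction $(\forall\alpha_0\,\gamma_1)\wedge(\forall\alpha_0\forall\beta_0\,\gamma_2)$ whose matrices $\gamma_1,\gamma_2$ are arithmetical, containing no function quantifiers. Applying the negative-translation lemma to each conjunct and simplifying the $\Pi^0_2$- and $\Pi^0_1$-pieces with $\mathrm{MP}$ exactly as in Theorem~\ref{thm-main-2} gives, provably in $\mathrm{EL}_0+\mathrm{MP}$, $(\forall\alpha_0\,\gamma'_1)\wedge(\forall\alpha_0\forall\beta_0\,\gamma'_2)$, where the primes denote replacing $\xi_i,\psi_i$ by $\xi_i^q,\psi_i^q$; since the matrices contain no function quantifiers, this derivation only contracts function-quantifier-free formulas and hence already lives in $(\mathrm{EL}_0+\mathrm{MP})^{\exists\alpha\mathrm{a}}$ (here one checks that the negative-translation lemma, restricted to statements whose only function quantifiers are a leading block of universal ones, is itself affine in this sense). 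From this one derives $\zeta'_1\to\zeta'_0$ by the obvious modus-ponens argument: given $\alpha_0$ with $\xi_0^q(\alpha_0)$, use $\forall\alpha_0\,\gamma'_1$ once to obtain $\Phi_{e_1}(\alpha_0){\downarrow}$ and $\xi_1^q(\Phi_{e_1}(\alpha_0))$, feed this single instance into $\zeta'_1$ once to obtain $\beta_1$ with $\psi_1^q(\Phi_{e_1}(\alpha_0),\beta_1)$, and use $\forall\alpha_0\forall\beta_0\,\gamma'_2$ once (at $\beta_0:=\beta_1$) to produce $\Phi_{e_2}(\alpha_0,\Phi_{e_1}(\alpha_0),\beta_1)$ as a solution for $\zeta'_0$. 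The context-splitting form of $({\to} L)$ means no weakening is needed, and each of $\zeta'_1$, $\forall\alpha_0\,\gamma'_1$, $\forall\alpha_0\forall\beta_0\,\gamma'_2$ is used exactly once, so the derivation respects the affine restrictions.

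For \eqref{equiv2}$\Rightarrow$\eqref{equiv1} I would mimic the chain Lemma~\ref{lem-con-sim} $\to$ Theorem~\ref{thm-strong-realiser} $\to$ Theorem~\ref{thm-real-to-weihrauch} $\to$ Theorem~\ref{thm-main-1}. Starting from a proof of $\zeta'_1\seqto\zeta'_0$ in $(\mathrm{EL}_0+\mathrm{MP})^{\exists\alpha\mathrm{a}}$, one first checks that free-cut elimination and the contraction-similarity transformation of Lemma~\ref{lem-con-sim} can be carried out without leaving this system: the only contractions present are on function-quantifier-free formulas, which are handled just as in Lemma~\ref{lem-con-sim} by replacing $\psi,\psi$ with $\psi\wedge\psi$, whereas the forbidden moves, namely $(C)$ and $(W)$ on formulas with function quantifiers, never occur. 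Hence $\zeta'_1$, which does contain function quantifiers, is used linearly, and one arrives at a free-cut free proof in $(\mathrm{EL}_0+\mathrm{MP})^{\mathrm a}$ from $\Sigma$ of a sequent $\psi_1'\seqto\psi_0'$ with each $\psi_i'$ contraction-similar to $\zeta'_i$ — in particular a single formula on the left. Since $\zeta'_1\seqto\zeta'_0$ is number-negative (the leading $\neg\neg$ of the Kuroda translation places every quantifier of $\xi_0^q$ and $\psi_1^q$ in the scope of a negation) so is $\psi_1'\seqto\psi_0'$, and Theorem~\ref{thm-strong-realiser} yields, provably in $\mathrm{RCA}_0$, a strong realiser of $\psi_1'\seqto\psi_0'$ (or of $\seqto\psi_0'$, in which case $\zeta_0$ is outright computably true). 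Theorem~\ref{thm-real-to-weihrauch} then gives, with $n=1$, that $\mathrm{RCA}_0$ proves $\psi_0'$ Weihrauch-reduces to $\psi_1'$. Finally, Lemma~\ref{lem-con-sim-equiv} gives $\mathrm{RCA}_0\vdash\psi_i'\leftrightarrow\zeta_i'$, and since the Kuroda translation is classically trivial, also $\mathrm{RCA}_0\vdash\zeta_i'\leftrightarrow\zeta_i$; because all of these are equivalences of $\Pi^1_2$-formulas whose instances and solutions are literally the same functions, the reduction $\psi_0'\le_{\mathrm W}\psi_1'$ transports (up to trivial pre- and post-composition) to numbers $e_1,e_2$ for which $\mathrm{RCA}_0$ proves that $e_1,e_2$ witness $\zeta_0\le_{\mathrm W}\zeta_1$.

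The main obstacle is the bookkeeping in \eqref{equiv2}$\Rightarrow$\eqref{equiv1}: one must verify that free-cut elimination, the induction of Lemma~\ref{lem-con-sim}, and the realiser constructions of Theorems~\ref{thm-realiser} and~\ref{thm-strong-realiser} all preserve the affine restrictions of $(\mathrm{EL}_0+\mathrm{MP})^{\exists\alpha\mathrm{a}}$, and — most importantly — that, since neither $(C)$ nor $(W)$ may touch a formula containing a function quantifier, the hypothesis $\zeta'_1$ can never be duplicated, forcing the output of Theorem~\ref{thm-real-to-weihrauch} to be an honest Weihrauch reduction rather than a reduction to a composition. Checking that the weakenings in the proof can be taken of the special shape allowed by $\exists\alpha\mathrm{a}$, as is already exploited in the proof of Theorem~\ref{thm-strong-realiser}, is the other delicate point.
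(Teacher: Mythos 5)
Your proposal is correct and follows the same two-step strategy as the paper: for \eqref{equiv1}$\Rightarrow$\eqref{equiv2} you re-run the Theorem~\ref{thm-main-2} argument and observe that since the Weihrauch-reduction statement is the universal closure of an arithmetical formula, the derivation (and the subsequent modus-ponens argument) only contracts function-quantifier-free formulas and so stays in $(\mathrm{EL}_0+\mathrm{MP})^{\exists\alpha\mathrm{a}}$; for \eqref{equiv2}$\Rightarrow$\eqref{equiv1} you re-run the chain Lemma~\ref{lem-con-sim}~$\to$ Theorem~\ref{thm-strong-realiser}~$\to$ Theorem~\ref{thm-real-to-weihrauch} and note that, because $(C)$ and $(W)$ cannot touch the function-quantified $\zeta'_1$, the left-hand side stays a single formula and Theorem~\ref{thm-real-to-weihrauch} yields $n=1$. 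This matches the paper's (much terser) proof essentially step for step, including the observation underlying the paper's footnote that stray weakenings on instances of QF-AC$^{0,0}$ must be dealt with.
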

\begin{proof}
First, the fact that \eqref{equiv2} implies \eqref{equiv1} follows directly from the proof of Theorem \ref{thm-main-1}. Conversely, if \eqref{equiv1} holds, then, as in the proof of Theorem \ref{thm-main-2}, we know that $\mathrm{EL}_0 + \mathrm{MP}$ proves that $\zeta_0$ Weihrauch-reduces to $\zeta_1$. Since we can formulate this as the universal closure of an arithmetical formula $\eta$, we know that there is a free-cut free proof of $\eta$, and since $\eta$ does not contain function quantifiers this proof is in fact in $(\mathrm{EL}_0 + \mathrm{MP})^{\exists\alpha\mathrm{a}}$.\footnote{There might be a weakening with as principal formula a formula of the form $\exists \alpha \forall z B(x,\alpha(z))$, which is later a subformula of a principal formula of a cut, but it is not hard to see that such a pair of a weakening and a cut can be eliminated.}
Now it is not hard to verify that this proof can be extended to a proof of $\zeta_0 \to \zeta_1$ in $(\mathrm{EL}_0 + \mathrm{MP})^{\exists\alpha\mathrm{a}}$.
\end{proof}

\bibliographystyle{amsplain}

\newcommand{\noopsort}[1]{}
\providecommand{\bysame}{\leavevmode\hbox to3em{\hrulefill}\thinspace}
\providecommand{\MR}{\relax\ifhmode\unskip\space\fi MR }
\providecommand{\MRhref}[2]{%
  \href{http://www.ams.org/mathscinet-getitem?mr=#1}{#2}
}
\providecommand{\href}[2]{#2}

\end{document}